\def\R{\mathbb{R}}
\def\C{\mathbb{C}}
\def\N{\mathbb{N}}
\def\Z{\mathbb{Z}}
\def\F{\mathbb{F}}
\def\cL{\mathcal{L}}
\def\cR{\mathcal{R}}
\def\s{\sigma}
\def\d{\delta}
\def\r{\rho}
\newcommand\CB{\mathop{\mathrm{CB}}\nolimits}
\def\El{\mathcal{E}\ell}
\def\MM{\mathbb{M}}
\newcommand{\Ad}{\mathop{\mathrm{Ad}}}
\newcommand{\M}{M_{\mathrm{loc}}}
\newcommand{\PP}{\mathrm{Prim}}
\def\CE{\mathcal{E}}
\newtheorem{proposition}{Proposition}[section]
\newtheorem{lemma}[proposition]{Lemma}
\newtheorem{theorem}[proposition]{Theorem}
\newtheorem{corollary}[proposition]{Corollary}
\theoremstyle{definition}
\newtheorem{remark}[proposition]{Remark}
\newtheorem{problem}[proposition]{Problem}
\newtheorem{definition}[proposition]{Definition}
\newtheorem{example}[proposition]{Example}
\numberwithin{equation}{section}
\begin{document}

\title[]{The cb-norm approximation of generalized skew derivations by elementary operators}

\author{Ilja Gogi\'{c}}

\date{3 July 2019}

\dedicatory{Dedicated to the memory of my mentor and a friend, Professor R.~M.~Timoney}

\address{Department of Mathematics, University of Zagreb, Bijeni\v{c}ka 30,
10000 Zagreb, Croatia}

\email{ilja@math.hr}

\thanks{This work was supported by the Croatian Science Foundation under the project IP-2016-06-1046.}

\keywords{Elementary operator, generalized skew derivation, epimorphism, prime $C^*$-algebra}

\subjclass[2010]{Primary 16W20, 47B47. Secondary 46L07, 16N60}

\begin{abstract} 
Let $A$ be a ring and $\sigma: A \to A$ a ring endomorphism. A generalized skew (or $\sigma$-)derivation of $A$ is an additive  map $d: A \to A$ for which there exists a map $\delta:A \to A$ such that $d(xy)=\delta(x)y+\sigma(x)d(y)$ for all $x,y \in A$. If $A$ is a prime $C^*$-algebra and $\sigma$ is surjective, we determine the structure of  generalized $\sigma$-derivations of $A$ that belong to the cb-norm closure of elementary operators $\El(A)$ on $A$; all such maps are of the form $d(x)=bx+axc$ for suitable elements $a,b,c$ of the multiplier algebra $M(A)$. As a consequence, if an epimorphism $\sigma: A \to A$ lies in the cb-norm closure of $\El(A)$, then $\sigma$ must be an inner automorphism. We also show that these results cannot be extended even to relatively well-behaved non-prime $C^*$-algebras like $C(X,\mathbb{M}_2 )$.
\end{abstract}

\maketitle

\section{Introduction}

A well-known consequence of Skolem-Noether theorem (see e.g. \cite[Theorem~4.46]{Bre}) is that a finite-dimensional central simple algebra $A$ over a field $\F$ admits only inner derivations and inner automorphisms. This fact can be also proved by observing that all $\F$-linear maps $\phi : A \to A$ are elementary operators, i.e. they can be written as finite sums of two-sided  multiplications $x \mapsto axb$, with $a,b \in A$ (see \cite[Lemma~1.25, Theorem~1.30]{Bre}). Hence, one can ask the following general question:
\begin{problem}\label{prob1}
Under which conditions on a semiprime ring (or an algebra) $R$ are all derivations and/or automorphisms of $R$ that are also elementary operators necessarily inner?
\end{problem}

In order to investigate Problem \ref{prob1} it is sometimes convenient to consider maps $d: R \to R$ that  comprise both (generalized) derivations and automorphisms. One particularly interesting class of such maps $d$ is the following (see e.g. \cite{Lee,LL}):
\begin{definition}\label{gsd}
Let $R$ be a ring and let $\s : R \to R$ be a ring endomorphism. An additive map $d: R \to R$ is called a \textit{generalized $\s$-derivation} (or a \textit{generalized skew derivation}) if there exists a map $\d:R \to R$  such that
\begin{equation}\label{gsder}
d(xy)=\d(x)y+\s(x)d(y)
\end{equation}
for all $x,y \in R$.
\end{definition}
In case when $R$ is semiprime and $\sigma$ is surjective, in \cite{EGI} we considered the problem of determining the structure of generalized $\sigma$-derivations $d:R\to R$ that are also elementary operators. In order to give a description of such maps, we used standard techniques of the theory of rings of quotients. As a consequence of the proof of \cite[Theorem 1.2]{EGI}, which is the main result of that paper, we showed that if $R$ is semiprime and centrally closed, then a derivation $\delta:R \to R$ (resp. a ring epimorphism $\sigma: R \to R$) is an elementary operator if and only if $\delta$ (resp. $\sigma$) is an inner derivation (resp. inner automorphism), thus giving an affirmative answer to Problem \ref{prob1} for this class of rings.  

It is also interesting to consider Problem \ref{prob1} in the setting of $C^*$-algebras. Moreover, when working with $C^*$-algebras, it is well-known that their derivations, automorphisms and elementary operators are completely bounded. This motivates us to consider the following analytic variation of Problem \ref{prob1}:

\begin{problem}\label{prob2}
Under which conditions on a $C^*$-algebra $A$ are all derivations and/or automorphisms of $A$ that admit a cb-norm approximation by elementary operators necessarily inner?
\end{problem}
In our previous work we considered Problem \ref{prob2} only for derivations. More precisely, we showed that all such derivations of $A$ are inner in a case when $A$ is prime  \cite[Theorem~4.3]{Gog1} or central \cite[Theorem~5.6]{Gog1}. This result was further extended in \cite[Theorem~1.5]{Gog2} for unital $C^*$-algebras whose every Glimm ideal is prime. The latter result in particular applies to derivations of local multiplier algebras (see e.g. \cite{AM}), since their Glimm ideals are prime  \cite[Corollary~3.5.10]{AM}. Hopefully, this result might be useful in order to give an answer to Pedersen's problem from 1978, which asks whether all derivations of local multiplier algebras are inner \cite{Ped}.  

\smallskip 

Motivated by these results, in this paper we consider the problem 
of determining the structure of generalized $\sigma$-derivations of $C^*$-algebras $A$, with $\sigma$ surjective, that can be approximated by elementary operators in the cb-norm. 

The main result of this paper is Theorem \ref{main}, where we fully describe the structure of such maps when $A$ is a prime $C^*$-algebra. In particular, if $A$ is prime, we show that if an epimorphism $\s:A \to A$ admits a cb-norm approximation by elementary operators, then $\s$ must be an inner automorphism of $A$ (Corollary \ref{primeinnaut}). The proof of Theorem \ref{main} is given through several steps in Section \ref{results}. 

In Section \ref{remarks} we consider the possible generalization of Theorem \ref{main} and its consequences for $C^*$-algebras that are not necessarily prime. However, this generalization will not be possible even for some well-behaved $C^*$-algebras, like homogeneous $C^*$-algebras, even thought they have only inner derivations (see \cite[Theorem~1]{Spr} for unital case and  \cite[Proposition~3.2]{Gog3} for general case). In fact, we show that for each $n \geq 2$ there is a compact manifold $X_n$ such that the $C^*$-algebra $C(X_n,\MM_n)$ (where $\MM_n$ is the algebra of $n\times n$ complex matrices) admits  outer automorphisms that are simultaneously elementary operators (Proposition \ref{homogcount}). We also give an example of a unital separable $C^*$-algebra $A$ that admits both outer derivations and outer automorphisms that are also elementary operators (Proposition \ref{Ainf}).

\section{Preliminaries}\label{prem}

Let $R$ be a ring. As usual, by $Z(R)$ we denote its centre. By an ideal of $R$ we always mean a two-sided ideal. An ideal $I$ of $R$ is said to be \textit{essential} if $I$ has a non-zero intersection with every other non-zero ideal of $R$. If $R$ is unital, by $R^\times$ we denote the set of all invertible elements of $R$. 

Recall that a ring $R$ is said to be  \textit{semiprime} if for $x \in R$, $xRx=\{0\}$ implies $x=0$. If in addition for $x,y \in R$, $xRy=\{0\}$ implies $x=0$ or $y=0$, $R$ is said to be \textit{prime}.

If $R$ is a semiprime ring by $M(R)$ we denote its multiplier ring (see \cite[Section~1.1]{AM}). Note that $M(R)$ is also a semiprime ring and that $R$ is prime if and only if $M(R)$ is prime \cite[Lemma~1.1.7]{AM}. For each $a \in M(R)^\times$ we denote by $\mathrm{Ad}(a)$ the  automorphism $x \mapsto axa^{-1}$. We call such automorphisms inner (when $R$ is unital this coincides with the standard notion of the inner automorphism). 

\begin{remark}\label{dpr} If $R$ is a semiprime ring and $d : R \to R$ a generalized $\s$-derivation (Definition \ref{gsd}), then a map $\delta$
is obviously uniquely determined by $d$. Moreover, $\delta$ is a \textit{$\s$-derivation}, that is $\d$ is an additive map that satisfies 
\begin{equation}\label{skewder}
\d(xy)=\d(x)y+\s(x)\d(y)
\end{equation}
for all $x,y \in R$. Indeed, using (\ref{gsder}) and additivity of $d$ and $\s$, for all $x,y,z \in R$ we have  
$$d((x+y)z)=\d(x+y)z+\s(x+y)d(z)=\d(x+y)z+\s(x)d(z)+\s(y)d(z)$$
and 
$$d(xz+yz)=d(xz)+d(yz)=\d(x)z+\s(x)d(z)+\d(y)z+\s(y)d(z).$$
Subtracting these two equations we get the additivity of $\d$. Similarly, subtracting the next two equations
$$d(xyz)=\d(xy)z+
\s(xy)d(z)=\d(xy)z+\s(x)\s(y)d(z),$$
$$d(xyz)=\d(x)yz+\s(x)d(yz)=\d(x)yz + \s(x)\d(y)z+ \s(x)\s(y)d(z)$$
shows (\ref{skewder}). Further, it is now easy to verify that the map $\r:=d-\d$ is a \textit{left $R$-module $\s$-homomorphism}, that is $\rho: R \to R$ is an additive map that satisfies
\begin{equation}\label{skewmodhom}
\rho(xy)=\s(x)\rho(y)
\end{equation}
for all $x,y \in R$. Therefore, every generalized $\s$-derivation can be uniquely decomposed as
$$d=\delta + \rho,$$
where $\delta$ is a $\s$-derivation and $\rho$ is a left $R$-module $\s$-homomorphism. In particular, generalized $\s$-derivations simultaneously generalize $\s$-derivations of $R$ (we get them for $d=\d$) and left $R$-module $\s$-homomorphisms of $R$ (we get them for $\d=0$).
\end{remark}

\begin{example}
The simplest examples of $\sigma$-derivations $\delta : R \to R$ are \textit{inner $\s$-derivations}, i.e. those of the form 
$$\delta(x)=ax-\s(x)a,$$
where $a$ is some element of $M(R)$. Further, any map of the form $$\rho(x)=\sigma(x)a,$$ where $a \in M(R)$, is a left $R$-module $\s$-homomorphism. If $R$ is unital, then all left $R$-module $\s$-homomorphism of $R$ are of this form ($M(R)=R$ in this case).
\end{example}

\smallskip 

Throughout this paper $A$ will be a $C^*$-algebra. Then $M(A)$ has a structure of a $C^*$-algebra and is called the \textit{multiplier algebra} of $A$. It is well-known (and easily checked) that $A$, as a ring, is semiprime. As usual, by $\CB(A)$ we denote the set of all completely bounded maps $\phi : A \to A$ (see e.g. \cite{Paul}). For  $S \subseteq \CB(A)$ we denote by $\overline{\overline{S}}_{cb}$ the cb-norm closure of $S$.  

The most prominent class of completely bounded maps on $A$ are \textit{elementary operators}, i.e. those that can be expressed as finite sums of \textit{two-sided multiplications} $M_{a,b} : x \mapsto
axb$, where $a$ and $b$ are elements of $M(A)$. We denote the set of all elementary operators on $A$ by $\El(A)$. It is well-known that elementary operators on $C^*$-algebras are completely bounded. In fact, we have the following estimate for their cb-norm:
\begin{equation}\label{eq:Haagest}
\left\|\sum_{i} M_{a_i,b_i}\right\|_{cb}\leq \left\|\sum_i a_i \otimes b_i\right\|_h,
\end{equation}
where $\|\cdot\|_h$ is the Haagerup tensor norm on the algebraic
tensor product $M(A) \otimes M(A)$, i.e.
\[
\|t\|_h = \inf \left\{\left\|\sum_{i} a_ia_i^*\right\|^{\frac{1}{2}}\left\|\sum_{i} b_i^*b_i\right\|^{\frac{1}{2}} \ : \ t=\sum_{i} a_i \otimes b_i\right\}.
\]

By inequality (\ref{eq:Haagest}) the mapping
\[
(M(A) \otimes M(A), \|\cdot\|_h) \to (\El(A), \|\cdot\|_{cb}) \quad \mbox{given by} \quad \sum_{i} a_i \otimes b_i \mapsto \sum_{i} M_{a_i,b_i}.
\]
is a well-defined contraction. Its continuous extension to the completed Haagerup tensor product $M(A) \otimes_h M(A)$ is known as a \emph{canonical contraction} from $M(A) \otimes_h M(A)$ to $\CB(A)$ and is denoted by $\Theta_A$. We have the following result (see \cite[Proposition 5.4.11]{AM}):

\begin{theorem}[Mathieu]\label{thetaiso}
$\Theta_A$ is isometric if and only if $A$ is a prime $C^*$-algebra.
\end{theorem}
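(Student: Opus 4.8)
The plan is to establish the two implications separately. Since the estimate $\|\Theta_A(t)\|_{cb} \le \|t\|_h$ has already been recorded (inequality (\ref{eq:Haagest}), extended by continuity to the completion $M(A) \otimes_h M(A)$), the substantive content is the direction in which primeness implies that $\Theta_A$ is isometric; the converse I would handle by contraposition.

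\emph{Not prime implies not isometric.} If $A$ fails to be prime, choose nonzero $a, b \in A$ with $aAb = \{0\}$. Then $M_{a,b}$ annihilates $A$, so $\Theta_A(a \otimes b) = 0$, while $\|a \otimes b\|_h = \|a\|\,\|b\| > 0$ since the Haagerup norm is a cross norm on elementary tensors. Hence $\Theta_A$ is not even injective, a fortiori not isometric.

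\emph{Prime implies isometric.} It suffices to show $\|\Theta_A(t)\|_{cb} \ge \|t\|_h$ for an arbitrary finite sum $t = \sum_{i=1}^n a_i \otimes b_i$ in $M(A) \otimes M(A)$, the general case following by continuity. The tool I would use is the representation of the Haagerup norm coming from the Christensen--Effros--Sinclair / Paulsen--Smith factorization of completely bounded bilinear maps on $C^*$-algebras (see e.g. \cite{Paul}):
\[
\|t\|_h \;=\; \sup\left\{\, \Bigl\| \sum_{i=1}^n \pi(a_i)\, S\, \rho(b_i) \Bigr\| \,\right\},
\]
the supremum being taken over all $*$-representations $\pi, \rho$ of $M(A)$ on Hilbert spaces $K_1, K_2$ and all contractions $S : K_2 \to K_1$. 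Fixing such $\pi, \rho, S$ together with unit vectors $\xi \in K_2$ and $\eta \in K_1$ that realize a value exceeding $\|t\|_h - \varepsilon$, the task becomes to transport this datum into a matrix amplification of $\Theta_A(t)$: to produce an integer $m$, a contraction $x \in M_m(A)$, and unit vectors $u, v$ in the $m$-fold amplification of a faithful non-degenerate representation of $A$ for which
\[
\bigl\langle \Theta_A(t)^{(m)}(x)\, u,\, v \bigr\rangle \;=\; \sum_{i=1}^n \bigl\langle (a_i \otimes 1_m)\, x\, (b_i \otimes 1_m)\, u,\, v \bigr\rangle
\]
lies within $2\varepsilon$ of $\sum_i \langle \pi(a_i)\, S\, \rho(b_i)\, \xi,\, \eta \rangle$. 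This yields $\|\Theta_A(t)\|_{cb} \ge \|\Theta_A(t)^{(m)}\| \ge \|t\|_h - 2\varepsilon$, and letting $\varepsilon \to 0$ concludes.

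The heart of the argument --- and the only place where primeness, as opposed to mere semiprimeness, enters --- is this transfer step. Morally, the element $x$ one needs is a rank-one-like operator carrying finite-dimensional approximations of the vectors $\rho(b_i)\xi$ onto those of $\pi(a_i)^*\eta$, together with the partial isometry implementing this correspondence, all housed inside matrices over $A$; so one must locate, inside an arbitrary prime $C^*$-algebra and in sufficiently good relative position to the finitely many prescribed elements $a_i, b_i$, a system of approximate matrix units of the required shape. This is precisely what Glimm's lemma and Kadison's transitivity theorem are designed to deliver, once one passes to a faithful, sufficiently large representation of $A$ (inside which $M(A)$ sits as the idealizer); and it is here that semiprimeness is genuinely insufficient, as the non-prime examples of Section \ref{remarks} confirm. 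I expect the packaging of these approximate matrix units into a single contraction $x \in M_m(A)$, and the bookkeeping of the resulting inner products, to be the most delicate part of the proof; by contrast, the reduction to the representation formula and the contrapositive for the converse are routine.
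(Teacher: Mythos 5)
Your treatment of the easy half (non-prime $\Rightarrow$ not isometric) is correct and coincides with the only part of this statement the paper itself argues: the theorem is quoted from \cite[Proposition~5.4.11]{AM} without proof, and the remark following it in the paper sketches exactly your observation that $aAb=\{0\}$ with $a,b\neq 0$ gives $\Theta_A(a\otimes b)=0$ while $\|a\otimes b\|_h=\|a\|\,\|b\|>0$. Your reduction of the hard half to the Christensen--Effros--Sinclair/Paulsen--Smith description of the Haagerup norm, $\|t\|_h=\sup\bigl\|\sum_i\pi(a_i)S\rho(b_i)\bigr\|$, is also a legitimate and standard starting point.

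The problem is that the hard half is not actually proved. The ``transfer step'' you isolate is the entire content of Mathieu's theorem, and your account of it is programmatic: you describe what object would have to be produced and then defer its construction. Two concrete obstacles are left unaddressed. First, you propose to realize the data $(\pi,\rho,S,\xi,\eta)$ inside a matrix amplification of the bimodule action of $M(A)$ on $A$ by means of Kadison transitivity; but transitivity is a theorem about irreducible representations, and a prime $C^*$-algebra need not be primitive (prime equals primitive only under separability-type hypotheses), so there is in general no faithful irreducible representation in which to run that argument --- ``a faithful, sufficiently large representation'' does not support transitivity. Second, and more fundamentally, the representations $\pi$ and $\rho$ in the CSPS factorization are arbitrary and may be mutually disjoint, neither related to the other nor to the identity representation of $M(A)$ on $A$; the whole force of primeness is that it nevertheless lets the supremum over all triples $(\pi,\rho,S)$ be approximated by the single pair coming from left and right multiplication on $A$ itself. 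Your sketch never explains the mechanism by which primeness achieves this, even though this is precisely where the theorem fails otherwise: for $A=A_1\oplus A_2$, taking $\pi$ supported on $A_1$ and $\rho$ on $A_2$ produces a completely contractive bilinear form that the inner action cannot see. Until this step is carried out --- via Glimm's lemma together with a reduction to the primitive case, or via the arguments of \cite[Chapter~5]{AM} --- what you have is a correct statement of what must be shown rather than a proof of it.
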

If $A$ is not a prime note that $\Theta_A$ is not even injective. Indeed, in this case there are non-zero elements $a,b \in A$ such that $aAb=\{0\}$. Then $a\otimes b$ defines a non-zero tensor in $M(A)\otimes M(A)$ but $M_{a,b}=0$. For a unital but not necessarily prime $C^*$-algebra one can construct a central Haagerup tensor product $A \otimes_{Z,h} A$ and consider the induced contraction $\Theta_{A}^Z : A \otimes_{Z,h} A \to \CB(A)$ (the questions when $\Theta_{A}^Z$ is isometric or injective were treated in \cite{Som, AST1, AST2}).

\section{Results}\label{results}

We begin this section by stating the main result of this paper:
\begin{theorem}\label{main} Let $A$ be a prime $C^*$-algebra and let $d: A \to A$ be a generalized $\s$-derivation, with $\s$ surjective. The following conditions are equivalent:
\begin{itemize}
\item[(i)] $d\in \overline{\overline{\El(A)}}_{cb}$.
\item[(ii)] Either $d$ is a left multiplication implemented by some element of $M(A)$ or $\s$ is an inner automorphism of $A$. 
In the latter case, if  $d=\delta+\rho$ is a decomposition  as in Remark \ref{dpr}, then $\delta$ is an inner $\s$-derivation and $\rho$ is a right multiplication of $\sigma$ by some element of $M(A)$.
\end{itemize}
\end{theorem}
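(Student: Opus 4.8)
The plan is to prove Theorem~\ref{main} by reducing everything to the decomposition $d = \delta + \rho$ from Remark~\ref{dpr} and analyzing the two pieces with the help of Mathieu's theorem (Theorem~\ref{thetaiso}). The implication (ii)$\Rightarrow$(i) is the easy direction: a left multiplication $x\mapsto bx = M_{b,1}(x)$ is already elementary; an inner $\s$-derivation $x\mapsto ax-\s(x)a$ with $\s=\Ad(u)$ equals $M_{a,1} - M_{u,u^{-1}a} = M_{a,1} - M_{u,u^{-1}a}$, hence elementary; and a right multiplication of $\s$, i.e. $x\mapsto \s(x)a = M_{u,u^{-1}a}(x)$, is elementary. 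So every $d$ of the stated form lies in $\El(A)\subseteq\overline{\overline{\El(A)}}_{cb}$.

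The substance is (i)$\Rightarrow$(ii). First I would dispose of the case $\delta = 0$, i.e. $d = \rho$ is a left $R$-module $\s$-homomorphism: then $\rho(xy) = \s(x)\rho(y)$, and I would argue that $\rho \in \overline{\overline{\El(A)}}_{cb}$ forces $\rho$ to be implemented by a fixed ``column'' of $M(A)$; multiplicativity in the first variable then pins $\s$ down. Concretely, one expects $\rho(x) = \s(x)a$ for some $a\in M(A)$, and if $a$ is not a zero divisor one recovers $\s = \Ad(u)$ with $u$ a suitable polar/invertible part; if $\rho$ collapses, $d$ is a left multiplication. In general, for $\delta \neq 0$, the key is that $d\in\overline{\overline{\El(A)}}_{cb}$ means (by Theorem~\ref{thetaiso}, since $A$ is prime) that $d = \Theta_A(t)$ for a genuine element $t \in M(A)\otimes_h M(A)$; writing $t$ with a ``row'' $(a_i)$ and ``column'' $(b_i)$ of minimal length (a standard normalization making the $a_i$ left-linearly independent over the relevant quotient and similarly for the $b_i$), the generalized skew-derivation identity $d(xy) = \delta(x)y + \s(x)d(y)$ becomes, after substituting $d = \sum_i M_{a_i,b_i}$ and using primeness to match coefficients, a rigid system forcing the length to be small: one should end up with $d(x) = bx + a\,x\,c$ for fixed $a,b,c\in M(A)$. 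From $d(x)=bx+axc$ and $\delta(x)=d(x)-\rho(x)$ one reads off $\s(x)=$ (something like) $axa' $ up to the module-homomorphism part; surjectivity of $\s$ together with $\s$ being a unital-ish endomorphism then upgrades $a$ to an invertible multiplier and identifies $\s = \Ad(a)$, whence $\delta(x) = bx - \s(x)b$ is inner and $\rho$ is right multiplication of $\s$.

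The main obstacle I anticipate is the passage from ``$d$ lies in the cb-closure of $\El(A)$'' to ``$d$ is realized by a finite, length-controlled two-sided multiplier expression with enough rigidity to exploit the skew-Leibniz rule.'' Theorem~\ref{thetaiso} gives $d=\Theta_A(t)$ isometrically, but $t$ a priori lies in the \emph{completed} Haagerup tensor product, so it need not be a finite sum; the real work is showing that the algebraic relation $d(xy)=\delta(x)y+\s(x)d(y)$ — which holds for all $x,y$ and involves the unknown maps $\delta,\s$ — forces $t$ to be (equivalent to) a short finite tensor. This is where one must bring in the structure theory of the Haagerup tensor product over a prime $C^*$-algebra: slicing $t$ by functionals, using the module actions on both legs, and exploiting primeness (via $xAy=\{0\}\Rightarrow x=0$ or $y=0$) to cancel and collapse terms, much as in the purely algebraic argument of \cite{EGI} but now carried out at the level of $M(A)\otimes_h M(A)$ rather than the Martindale/Amitsur quotient ring. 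Once the finite realization $d(x)=bx+axc$ is in hand, the remaining algebra — deducing innerness of $\s$ from its surjectivity and reassembling $\delta$ and $\rho$ — is routine and parallels the semiprime case already treated in \cite{EGI}.
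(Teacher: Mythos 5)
Your skeleton matches the paper's: decompose $d=\delta+\rho$, use Theorem~\ref{thetaiso} to realize $d=\Theta_A(t)$ with $t\in M(A)\otimes_h M(A)$, and exploit the skew-Leibniz rule together with primeness. You also correctly identify the central obstacle --- that $t$ lives in the \emph{completed} Haagerup tensor product and is only an infinite sum. But you then stop at naming the obstacle; the three steps that actually overcome it are missing from your proposal, and none of them is routine.

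First, the paper never shows that $t$ collapses to a finite tensor, which is what you propose to aim for. Instead it uses Smith's strongly independent representations $t=\sum_{n=1}^\infty a_n\otimes b_n$ (Remark~\ref{rep}) together with the injectivity consequence (Corollary~\ref{corprimeinj}) to extract relations term by term from the infinite family. Second, to pass from the tensor identity $\delta(x)\otimes 1=\sum_n(a_nx-\s(x)a_n)\otimes b_n$ to a single implementing element $b$ with $\delta(x)=bx-\s(x)b$, one needs the slice-map argument of Lemma~\ref{lemteh}: slice at a point $x_0$ with $\delta(x_0)\neq 0$ to produce an $\ell^2$ sequence $(\alpha_n)$ with $\sum_n\alpha_nb_n=1$, then use strong independence of $(b_n)$ to equate coefficients. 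You gesture at ``slicing by functionals'' but do not carry this out, and without strong independence the coefficient comparison fails. Third, and most seriously, your treatment of the invertibility of $a$ is not correct as stated: from $a_nx=\s(x)a_n$ one cannot recover $\s=\Ad(u)$ via a ``polar/invertible part,'' since a non-zero, non-zero-divisor element of $M(A)$ need not admit any useful polar decomposition inside $M(A)$. The paper's Lemma~\ref{lemMsigma} proves invertibility by showing $aA$ is an essential ideal, constructing an essentially defined double centralizer $(\cL,\cR,aA)$, invoking automatic continuity to land the inverse in $Q_b(A)\subseteq\M(A)$, and finally using $M(A)^\times=M(A)\cap\M(A)^\times$. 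This is the analytic analogue of the Martindale-quotient argument from \cite{EGI} that you call ``routine,'' but the passage from invertibility in a ring of quotients back down to invertibility in $M(A)$ is precisely the point that requires the $C^*$-machinery and cannot be waved through.
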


\begin{remark}\label{leftmul}
\begin{itemize}
\item[(i)]
Note that any left multiplication $d=M_{l,1}$, where $l\in M(A)$, is a generalized $\s$-derivation with respect to any ring endomorphism $\s : A \to A$. Indeed, for any such $\s$, let $\delta(x)=lx-\s(x)l$ and $\rho(x)=\s(x)l$. Then obviously $d=\d + \rho$, so  in this case we cannot say anything about the epimorphism $\s$ and the corresponding maps $\d$ and $\rho$. 
\item[(ii)] If $d$ is not a left multiplication, then following the second case of part (ii) of Theorem \ref{main} we have 
$$\s=\Ad(a), \quad \delta(x)=bx-\s(x)b \quad \mbox {and} \quad \rho(x)=\s(x)b',$$ for some $a \in M(A)^\times$ and  $b,b' \in M(A)$, $b \neq b'$. In particular, $d$ is of the form
$$d(x)=bx+axc,$$
where $c:=a^{-1}(b'-b)$. 
\end{itemize}
\end{remark}

\begin{remark}
In the sequel of this section we assume that $A$ is an infinite-dimensional prime $C^*$-algebra, since otherwise \cite[Theorem~6.3.8]{Mur} and the primeness of $A$ would imply that $A$ is isomorphic to the matrix algebra $\MM_n$ for some non-negative integer $n$. Then every linear map $\phi :A \to A$ is an elementary operator (see e.g. \cite[Lemma~1.25]{Bre}), so Theorem \ref{main} is just a simple consequence of \cite[Theorem 1.2]{EGI} (the maximal right ring of quotients $Q_{mr}(A)$ in this case coincides with $A$).
\end{remark}

For the proof of Theorem \ref{main} we will need some auxiliary results. 
We start with the following:

\begin{lemma}\label{lemMsigma}
Let $A$ be a prime $C^*$-algebra. Suppose that $\s : A \to A$ is a ring epimorphism for which there exists a non-zero element $a \in M(A)$ such that
\begin{equation}\label{Ms}
ax=\sigma(x)a \quad \forall x \in A.
\end{equation}
Then $a$ is invertible in $M(A)$, so that $\s=\Ad(a)$ is an inner automorphism of $A$.
\end{lemma}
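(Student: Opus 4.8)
The plan is to exploit the relation $ax = \sigma(x) a$ together with surjectivity of $\sigma$ to show that $a$ has dense range and trivial kernel (in the multiplier sense), and then to upgrade this to genuine invertibility in $M(A)$ using primeness and the $C^*$-structure. First I would observe that, since $A$ is prime, $a \neq 0$ immediately gives that $a$ is not a left or right zero divisor in $M(A)$: if $ab = 0$ for some $b \in M(A)$, then for all $x,y \in A$ we have $ax\cdot(by) = \sigma(x)(ab)y = 0$, so $aAbA = \{0\}$, hence (using that $AbA$ generates a nonzero ideal when $b \neq 0$, by semiprimeness) $a = 0$, a contradiction; symmetrically on the other side, using the defining identity in the form $a x = \sigma(x) a$ to move $a$ past elements. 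This is the first key step: $a$ is a non-zero-divisor in $M(A)$.

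Next I would apply $\sigma$ (which is surjective) to rewrite the identity. Given any $z \in A$, pick $x$ with $\sigma(x) = z$; then $z a = \sigma(x) a = ax$, so $za \in aA$ for every $z \in A$. This shows $Aa \subseteq aA$, i.e. $aA$ is a left ideal of $A$, hence $\overline{aA}$ is a closed left ideal that is also a (two-sided, after taking the closed ideal it generates) and in a prime $C^*$-algebra a nonzero such object is "large''. More usefully: the closed linear span of $aA$ is a closed left ideal $L$ with $AL \subseteq L$, so $L$ is a closed two-sided ideal; since $a \neq 0$ and $A$ is prime, $L$ is essential, in fact I expect one can argue $L = A$ directly — for a prime $C^*$-algebra, if $aA$ spans a proper closed ideal then its annihilator is nonzero, contradicting that $a$ is a non-zero-divisor. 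So $\overline{aA} = A$, and similarly $\overline{Aa} = A$: the element $a$ has dense range acting on $A$ from both sides.

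The main obstacle — and the heart of the lemma — is passing from "dense range and injective multiplications'' to honest invertibility of $a$ in $M(A)$, since a priori $a$ need only be a topologically invertible-looking multiplier, not an invertible one. Here I would bring in the $C^*$-structure: consider $a^*a \in M(A)$, which is a positive non-zero-divisor with the same density properties. The idea is to show $a^*a$ is bounded below, i.e. $a^*a \geq \lambda 1$ in $M(A)$ for some $\lambda > 0$, equivalently $0$ is isolated from (or absent in) the spectrum of $a^*a$ in $M(A)$. If $0$ were in the spectrum, functional calculus would produce spectral projections / approximate units $e_n$ in $C^*(a^*a)\subseteq M(A)$ with $\|a^*a\, e_n\| \to 0$ while $\|e_n\| = 1$; multiplying by elements of $A$ and using primeness (together with the fact that $\sigma$ carries the relation across) one should derive $a A e_n' = \{0\}$ for suitable nonzero $e_n'$, contradicting that $a$ is a non-zero-divisor — this is the step where I expect to have to be careful, reconciling the multiplier-level spectral argument with the $A$-level primeness. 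Once $a^*a$ is invertible in $M(A)$ (and likewise $aa^*$, by the symmetric argument on the other side), $a$ itself is invertible in $M(A)$: $a^{-1} = (a^*a)^{-1}a^*$. Finally, with $a \in M(A)^\times$, the identity $ax = \sigma(x)a$ rearranges to $\sigma(x) = axa^{-1}$ for all $x \in A$, which is exactly $\sigma = \Ad(a)$, completing the proof.
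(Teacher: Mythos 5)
Your first step---that $a$ cannot be a zero divisor in $M(A)$---is correct and is exactly how the paper begins. The route from there to invertibility, however, has two genuine gaps. The smaller one: your argument that $\overline{aA}=A$ rests on the assertion that a proper closed ideal of a prime $C^*$-algebra has nonzero annihilator. This is false; a prime $C^*$-algebra need not be simple, and a proper \emph{essential} ideal (e.g.\ the compacts inside the Toeplitz algebra) has zero annihilator. What is true, and all the paper needs, is that $aA$ is a nonzero two-sided ideal (since $AaA=\sigma(A)aA=aAA\subseteq aA$), hence essential by primeness. The serious gap is the step you yourself flag as delicate: showing $a^*a\geq\lambda 1$ for some $\lambda>0$. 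If $0\in\mathrm{sp}(a^*a)$, functional calculus only produces norm-one elements $e_n$ with $\|a^*a\,e_n\|\to 0$; this is \emph{approximate} annihilation, and primeness is a statement about exact annihilation---there is no ``suitable nonzero $e_n'$'' with $aAe_n'=\{0\}$ unless $0$ is an isolated point of the spectrum, which is precisely what you would need to prove. A positive non-zero-divisor multiplier with dense range need not be invertible in general (an injective positive compact operator with dense range, viewed in $M(K(H))=B(H)$, is a non-zero-divisor there but not invertible), so the conclusion genuinely requires the intertwining relation $ax=\sigma(x)a$ with $\sigma$ surjective, and your sketch does not use it at this stage.

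The paper closes exactly this gap by a different mechanism. It defines $\cL,\cR:aA\to A$ by $\cL(ax)=\sigma(x)$ and $\cR(ax)=x$ (well defined because $a$ is not a left zero divisor), checks that $(\cL,\cR,aA)$ is an essentially defined double centralizer on the essential ideal $aA$, and invokes the automatic continuity of such centralizers on prime $C^*$-algebras (\cite[Corollary~2.2.15]{AM}) to obtain a bounded quotient element $b\in Q_b(A)\subseteq M_{\mathrm{loc}}(A)$ with $ba x=x$ and $\sigma(x)ab=\sigma(x)$ for all $x$, whence $ab=ba=1$ in $M_{\mathrm{loc}}(A)$; spectral permanence for the unital $C^*$-subalgebra $M(A)\subseteq M_{\mathrm{loc}}(A)$ then gives $a^{-1}=b\in M(A)$. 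The boundedness below of $a$ that you are after is exactly the continuity of $\cR$, and it is supplied by that automatic continuity theorem rather than by a direct spectral argument. To complete your approach you would in effect have to reprove a version of that theorem, which is substantially more work than the paragraph you have allotted to it.
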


Before proving Lemma \ref{lemMsigma} recall from \cite{AM} (see also \cite{BMM}) that an \textit{essentially defined double centralizer} on a semiprime ring $R$ is a triple 
$(\cL,\cR,I)$, where $I$ is an essential ideal of $R$, $\cL : I \to R$ is a left $R$-module homomorphism, $\cR: I \to R$ is a right $R$-module homomorphism such that $\cL(x)y=x\cR(y)$ for all $x \in I$.
One can form the \emph{symmetric ring of quotients} $Q_{s}(R)$ which is characterized (up to isomorphism) by the following properties:
\begin{itemize}
\item[(i)]
$R$ is a subring of $Q_{s}(R)$;
\item[(ii)] for any $q\in Q_{s}(R)$ there is
an essential ideal $I$ of $R$ such that $qI+Iq\subseteq R$;
\item[(iii)] if $0\neq q\in Q_{s}(R)$ and $I$ is an essential ideal of $R$, then $qI\neq0$ and $Iq\neq 0$;
\item[(iv)] for any essentially defined double centralizer $(\cL,\cR,I)$ on $R$ there exists $q \in Q_s(R)$ such that $\cL(x)=qx$ and $\cR(x)=xq$ for all $x \in I$.
\end{itemize}
In a case when $R=A$ is a $C^*$-algebra, $Q_s(A)$ has a natural structure as a unital complex $*$-algebra, whose involution is positive definite. An element $q \in Q_s(A)$ is called \emph{bounded}
if there is $\lambda \in \R_+$ such that $q^*q \leq \lambda 1$, in a sense that there is a finite number of elements $q_1,\ldots, q_n\in Q_s(A)$ such that 
$$q^*q + \sum_{i=1}^n q_i^*q_i=\lambda 1.$$ 
The set $Q_b(A)$ of all bounded elements of $Q_s(A)$ has a pre-$C^*$-algebra structure with respect to the norm 
$$\|q\|^2=\inf\{\lambda \in \R_+ : \ q^*q \leq \lambda 1\},$$
which clearly extends the norm of $A$. One can easily check that an element  $q \in Q_s(A)$ is bounded if and only if it can be represented by a bounded (continuous) essentially defined double centralizer (see \cite[p.~57]{AM}). We call $Q_b(A)$ the \emph{bounded symmetric algebra of quotients} of $A$ and its completion $\M(A)$ the \emph{local multiplier algebra} of $A$. Note that $\M(A)$ has a structure of a $C^*$-algebra as a completion of a pre-$C^*$-algebra.

\begin{proof}[Proof of Lemma \ref{lemMsigma}]
First note that a non-zero element $a \in M(A)$ that satisfies (\ref{Ms}) cannot be a zero-divisor. Indeed, if there exists $x \in M(A)$ such that $ax=0$ then
for each $y \in A$ we have $ayx=\sigma(y)ax=0,$ so that $aAx=\{0\}$. Since $A$ is prime and since $A$ is an essential ideal of $M(A)$, $a \neq 0$ implies  $x=0$. Similarly, if $xa=0$ then for all $y\in A$ we have $x \sigma(y)a=xay=0$. Since $\sigma$ is surjective, this is equivalent to $xAa=\{0\}$, so the primeness of $A$ again implies $x=0$.

We now show that $a$ is invertible in $M(A)$. Since $M(A)$ is a unital $C^*$-subalgebra of $\M(A)$, we have $M(A)^\times = M(A) \cap \M(A)^\times$, so it suffices to show that $a$ is invertible in $\M(A)$.
In order to do this, first note that $aA$ is a non-zero ideal of $A$, hence essential, since $A$ is prime. Indeed, since $\s$ is surjective, we have $A=\s(A)$, hence
$$
AaA=\left\{\sigma(x)ay : \ x,y \in A\right\} = \left\{axy  : \ x,y \in A \right\}\subseteq aA.
$$
In particular, for $\alpha \in \mathbb{C}$ and $x \in A$ we have $(\sigma(\alpha x)-\alpha \sigma(x))aA=\{0\}$, which implies $\sigma(\alpha x)=\alpha \sigma(x)$. Therefore $\sigma$ is a linear map, hence an algebra epimorphism.

We define maps $\cL,\cR : aA \to A$ by
$$\cL(ax)=\sigma(x) \quad \mbox{ and } \quad  \cR(ax)=x.$$
That the maps $\cL$ and $\cR$ are well-defined follows from the fact that $a$ is not a (left) zero-divisor. Clearly, $\cR$ is a right $A$-module homomorphism. Next, for $x,y \in A$ we have
$$\cL(\s(x)ay)=\cL(axy)=\s(xy)=\s(x)\s(y) = \s(x)\cL(ay)$$
and 
$$\cL(ax)ay=\s(x)ay=axy=ax\cR(ay).$$
This shows that $(\cL,\cR,aI)$ is an essentially defined double centralizer on $A$. Since $A$ is prime, by \cite[Corollary 2.2.15]{AM} all essentially defined double centralizers are automatically continuous.
In particular, there exists an element $b \in Q_b(A)\subseteq \M(A)$ such that
$$\s(x)=\cL(ax)=axb=\s(x)ab \quad \mbox{ and } \quad x=\cR(ax)=bax$$ 
for all $x \in A$. Since $\s(A)=A$, this is equivalent to $A(1-ab)=\{0\}$ and $(1-ba)A=\{0\}$. Hence,
$a$ is invertible in $\M(A)$ and $a^{-1}=b$. This completes the proof.
\end{proof}

Recall from \cite[Definition 3.2]{Smi} that a sequence $(a_n)$ in an infinite-dimensional $C^*$-algebra $B$ such that the series $\sum_{n=1}^\infty a_n^*a_n$
is norm convergent is said to be \textit{strongly independent} if
for every sequence  $(\alpha_n) \in \ell^2$, equality $\sum_{n=1}^\infty \alpha_n a_n=0$ implies $\alpha_n=0$ for all $n \in \N$.

The next fact can be deduced from \cite[Proposition 1.5.6]{BM}, \cite[Lemma 4.1]{Smi} and
\cite[Lemma 2.3]{ASS}.
\begin{remark}\label{rep} Let $B$ be an infinite-dimensional $C^*$-algebra.
\begin{itemize}
\item[(i)] Every tensor $t \in B \otimes_h B$ has a representation as a
convergent series $t=\sum_{n=1}^\infty a_n \otimes b_n,$ where $(a_n)$ and
$(b_n)$ are sequences in $B$ such that the series $\sum_{n=1}^\infty a_n a_n^*$
and $\sum_{n=1}^\infty b_n^*b_n$ are norm convergent. Moreover, the sequence $(b_n)$ can be chosen to be strongly independent.
\item[(ii)] If $t=\sum_{n=1}^\infty a_n \otimes b_n$ is a representation of $t$
as above, with $(b_n)$ strongly independent, then $t=0$ if and only if $a_n=0$
for all $n \in \N$. 
\end{itemize}
\end{remark}

\begin{corollary}\label{corprimeinj}
Let $A$ be a prime $C^*$-algebra and suppose that $(a_n)$, $(b_n)$ are sequences in $M(A)$ such that the series $\sum_{n=1}^\infty a_n a_n^*$
and $\sum_{n=1}^\infty b_n^*b_n$ are norm convergent, with $(b_n)$ strongly independent. If 
\begin{equation}\label{inj}
\sum_{n=1}^\infty a_n x b_n=0
\end{equation}
for all $x \in A$, then $a_n=0$ for all $n \in \N$.
\end{corollary}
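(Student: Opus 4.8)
The plan is to transfer the pointwise identity \eqref{inj} into the completed Haagerup tensor product $M(A) \otimes_h M(A)$ and then invoke Mathieu's theorem (Theorem \ref{thetaiso}) together with the strong independence criterion in Remark \ref{rep}(ii). First I would form the tensor $t = \sum_{n=1}^\infty a_n \otimes b_n \in M(A) \otimes_h M(A)$; this is a legitimate element of the completed Haagerup tensor product precisely because the two series $\sum_n a_n a_n^*$ and $\sum_n b_n^* b_n$ are norm convergent, which is exactly the summability condition appearing in Remark \ref{rep}(i). Applying the canonical contraction $\Theta_A$ to $t$ yields the completely bounded map $x \mapsto \sum_{n=1}^\infty a_n x b_n$, and the hypothesis \eqref{inj} says exactly that $\Theta_A(t) = 0$.

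Next I would use primeness of $A$. By Theorem \ref{thetaiso}, $\Theta_A$ is isometric, hence injective, when $A$ is prime; therefore $\Theta_A(t) = 0$ forces $t = 0$ in $M(A) \otimes_h M(A)$. (One should note here that since $A$ is prime, $M(A)$ is prime as well by \cite[Lemma~1.1.7]{AM}, so Theorem \ref{thetaiso} applies with $M(A)$ in place of $A$; alternatively, since the $a_n, b_n$ lie in $M(A)$ and the map acts on $A$, one works with $\Theta_A : M(A) \otimes_h M(A) \to \CB(A)$, whose isometricity follows from the prime case in the same way — this is the form in which \eqref{eq:Haagest} and the surrounding discussion are set up.) Finally, with $t = 0$ and the sequence $(b_n)$ chosen strongly independent, Remark \ref{rep}(ii) gives immediately that $a_n = 0$ for all $n \in \N$, which is the assertion.

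The only genuinely delicate point is the bookkeeping about which $C^*$-algebra plays the role of "$B$" in Remark \ref{rep} and of the base algebra in Theorem \ref{thetaiso}: the summability and strong-independence hypotheses are stated for sequences in $M(A)$, so Remark \ref{rep} is to be applied with $B = M(A)$ (which is infinite-dimensional since $A$ is), while the vanishing of the elementary-operator-type map is tested on $A$. Since $A$ is an essential ideal of $M(A)$ and $M(A)$ is prime, the isometry of the canonical contraction $M(A) \otimes_h M(A) \to \CB(M(A))$ restricts to an injectivity statement for the map $M(A) \otimes_h M(A) \to \CB(A)$, because a tensor that induces the zero operator on the essential ideal $A$ already induces the zero operator on all of $M(A)$. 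Once this identification is in place, the argument is a direct chain of implications and there is no further obstacle.
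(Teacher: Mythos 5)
Your proposal is correct and follows exactly the paper's own argument: form $t=\sum_n a_n\otimes b_n\in M(A)\otimes_h M(A)$, observe that \eqref{inj} says $\Theta_A(t)=0$, use Theorem \ref{thetaiso} to get $t=0$, and conclude via Remark \ref{rep}(ii). The bookkeeping you worry about in the last paragraph is already built into the paper's setup, since $\Theta_A$ is defined from the outset as a map $M(A)\otimes_h M(A)\to\CB(A)$ and Mathieu's theorem is stated for precisely that map, so no separate essential-ideal argument is needed.
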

\begin{proof}
If $t:=\sum_{n=1}^\infty a_n \otimes b_n \in M(A) \otimes_h M(A)$, then (\ref{inj}) is equivalent to $\Theta_A(t)=0$. Since $A$ is prime, by Theorem \ref{thetaiso} $\Theta_A$ is isometric (hence injective), so $t=0$. The claim now follows from part (ii) of Remark \ref{rep}.
\end{proof}

\begin{proposition}\label{propepi}
Let $A$ be a prime $C^*$-algebra and let $\s : A \to A$ be a ring  epimorphism. If  $\r : A \to A$ is a non-zero left $A$-module  $\s$-homomorphism, then the following conditions are equivalent:
\begin{itemize}
\item[(i)] $\r \in \overline{\overline{\El(A)}}_{cb}$.
\item[(ii)] There are elements $a,p \in M(A)$, with $a$ invertible and $p \neq 0$, such that $\s=\Ad(a)$ and
$$\r(x)=\s(x)p=axa^{-1}p$$ 
for all $x \in A$.
\end{itemize}
\end{proposition}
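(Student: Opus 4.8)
The plan is to handle (ii)$\Rightarrow$(i) trivially and to obtain (i)$\Rightarrow$(ii) by feeding a series representation of $\r$ into the module identity and then invoking Lemma~\ref{lemMsigma}. For (ii)$\Rightarrow$(i): if $\r(x)=axa^{-1}p$ then $\r=M_{a,\,a^{-1}p}$ is an elementary operator, hence lies in $\overline{\overline{\El(A)}}_{cb}$. For (i)$\Rightarrow$(ii) I would first identify the cb-closure of $\El(A)$ with a Haagerup tensor product: since $A$ is prime, $\Theta_A$ is an isometry by Theorem~\ref{thetaiso}, and as $M(A)\otimes_h M(A)$ is complete its image under $\Theta_A$ is a closed subspace of $\CB(A)$ containing $\El(A)$ as a dense subspace, so $\overline{\overline{\El(A)}}_{cb}=\Theta_A\bigl(M(A)\otimes_h M(A)\bigr)$. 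Hence $\r=\Theta_A(t)$ for some $t\in M(A)\otimes_h M(A)$; since $M(A)\supseteq A$ is infinite-dimensional, Remark~\ref{rep}(i) lets me write $t=\sum_{n\geq1}a_n\otimes b_n$ with $(a_n),(b_n)$ in $M(A)$, the series $\sum_n a_na_n^*$ and $\sum_n b_n^*b_n$ norm convergent, and $(b_n)$ strongly independent; then $\r(x)=\sum_{n\geq1}a_nxb_n$ for all $x\in A$.

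Next I would exploit the defining identity $\r(xy)=\s(x)\r(y)$ of a left $A$-module $\s$-homomorphism (equation~(\ref{skewmodhom})). Substituting the series gives $\sum_{n\geq1}\bigl(a_nx-\s(x)a_n\bigr)yb_n=0$ for all $x,y\in A$. For fixed $x$ the element $\s(x)\in A$ is bounded, so a Cauchy--Schwarz estimate for the row/column (Haagerup) norms shows that $\sum_n\bigl(a_nx-\s(x)a_n\bigr)\bigl(a_nx-\s(x)a_n\bigr)^*$ is again norm convergent; Corollary~\ref{corprimeinj}, applied with $a_nx-\s(x)a_n$ in the role of $a_n$, then yields $a_nx=\s(x)a_n$ for all $n\in\N$ and all $x\in A$. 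Since $\r\neq0$ we have $t\neq0$, so Remark~\ref{rep}(ii) provides some $a_{n_0}\neq0$; as $\s$ is a ring epimorphism, Lemma~\ref{lemMsigma} applies to $a:=a_{n_0}$ and gives that $a$ is invertible in $M(A)$ and $\s=\Ad(a)$.

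To complete the identification of $\r$, I would note that $a_nx=\s(x)a_n=axa^{-1}a_n$ forces $a^{-1}a_n$ to commute with every element of $A$; since $A$ is an essential ideal of $M(A)$ and $M(A)$ is prime (so $Z(M(A))=\C1$), this means $a^{-1}a_n=\lambda_n1$ for scalars $\lambda_n$, i.e. $a_n=\lambda_na$. Norm convergence of $\sum_n a_na_n^*=\bigl(\sum_n|\lambda_n|^2\bigr)aa^*$ shows $(\lambda_n)\in\ell^2$, whence $\sum_n\lambda_nb_n$ converges in norm (again by Cauchy--Schwarz, using convergence of $\sum_n b_n^*b_n$) to an element of $M(A)$. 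Setting $p:=a\sum_n\lambda_nb_n\in M(A)$ gives $\r(x)=\sum_n\lambda_n\,ax\,b_n=ax\sum_n\lambda_nb_n=axa^{-1}p=\s(x)p$ for all $x\in A$, and $p\neq0$ since $\r\neq0$; this is precisely~(ii).

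I expect the main obstacle to be the bookkeeping with the infinite series: verifying that the sequences formed from $(a_n)$ --- combined with $x$ and $\s(x)$, and later rescaled by the $\lambda_n$ --- still satisfy the norm-convergence hypotheses required by Corollary~\ref{corprimeinj} and by the summation steps, which reduces to Cauchy--Schwarz-type inequalities for sums of the form $\sum u_nu_n^*$; together with the more conceptual step of passing from ``$a_nx=\s(x)a_n$ for all $n$'' to the explicit form of $\r$ via Lemma~\ref{lemMsigma} and triviality of $Z(M(A))$. Everything else is a direct assembly of Theorem~\ref{thetaiso}, Remark~\ref{rep}, Corollary~\ref{corprimeinj}, and Lemma~\ref{lemMsigma}.
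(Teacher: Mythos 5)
Your proposal is correct and follows essentially the same route as the paper: identify $\overline{\overline{\El(A)}}_{cb}$ with the image of $\Theta_A$ via Theorem \ref{thetaiso}, feed the series representation from Remark \ref{rep} into the identity (\ref{skewmodhom}), apply Corollary \ref{corprimeinj} to obtain $a_nx=\s(x)a_n$, and invoke Lemma \ref{lemMsigma} to get $a:=a_{n_0}$ invertible and $\s=\Ad(a)$. The only (harmless) divergence is at the very end: the paper simply sets $p:=\sum_n a_nb_n$ and pulls $\s(x)$ out of the sum using $a_nx=\s(x)a_n$, whereas you take the extra detour of showing $a_n=\lambda_na$ via triviality of $Z(M(A))$ --- correct, but not needed.
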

\begin{proof} 
Since $A$ is prime, by Theorem \ref{thetaiso} the canonical contraction $\Theta_A : M(A) \otimes_h M(A) \to \CB(A)$ is isometric. In particular, 
the image of $\Theta_A$ is closed in the cb-norm so $\overline{\overline{\El(A)}}_{cb}$ coincides with the image of $\Theta_A$.
Hence, there is a tensor $t \in M(A) \otimes_h M(A)$ such that 
$\rho=\Theta_A(t)$. By Remark \ref{rep}, we can write
$t=\sum_{n=1}^\infty a_n \otimes b_n$, where $(a_n)$ and
$(b_n)$ are sequences in $M(A)$ such that the series $\sum_{n=1}^\infty a_n a_n^*$ and $\sum_{n=1}^\infty b_n^*b_n$ are norm convergent, with $(b_n)$ strongly independent. Then (\ref{skewmodhom}) implies
$$\sum_{n=1}^\infty (a_n x-\s(x)a_n)yb_n=0$$
for all $x,y\in A$. By Corollary \ref{corprimeinj} we have
\begin{equation}\label{ans}
a_nx = \s(x)a_n
\end{equation}
for all $n \in \N$. Since $\rho$ is non-zero, there is $n_0 \in \N$ such that $a_{n_0} \neq 0$. By Lemma \ref{lemMsigma} $a:=a_{n_0}$ is invertible
in $M(A)$. Hence $\s=\Ad(a)$ is an inner automorphism of $A$. Finally, if 
$p:=\sum_{n=1}^\infty a_n b_n \in M(A)$, using (\ref{ans}) we get
$$
\rho(x)=\sum_{n=1}^\infty a_n x b_n = \s(x) \left(\sum_{n=1}^\infty a_nb_n \right) = \s(x)p = axa^{-1}p. 
$$
\end{proof}

As a direct consequence of Proposition \ref{propepi} we get:
\begin{corollary}\label{primeinnaut}
If $A$ is a prime $C^*$-algebra then every ring epimorphism $\s : A \to A$ that lies in $\overline{\overline{\El(A)}}_{cb}$ must be an inner automorphism of $A$. 
\end{corollary}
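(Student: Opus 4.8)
The plan is to obtain Corollary \ref{primeinnaut} as an immediate application of Proposition \ref{propepi}, once we recognize a ring epimorphism as a special left $A$-module $\sigma$-homomorphism. First I would note that $\sigma$ itself satisfies identity (\ref{skewmodhom}): being multiplicative, $\sigma(xy)=\sigma(x)\sigma(y)$, so setting $\rho:=\sigma$ gives an additive map with $\rho(xy)=\sigma(x)\rho(y)$ for all $x,y\in A$. Thus $\rho=\sigma$ is a left $A$-module $\sigma$-homomorphism associated with the very endomorphism $\sigma$, which is precisely the situation of Proposition \ref{propepi}.

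Next I would check that this $\rho$ is non-zero. Since $A$ is prime it is in particular non-zero, and surjectivity of $\sigma$ gives $\sigma(A)=A\neq\{0\}$; hence $\rho=\sigma\neq 0$. Combining this with the standing hypothesis $\sigma\in\overline{\overline{\El(A)}}_{cb}$, condition (i) of Proposition \ref{propepi} holds, so condition (ii) produces elements $a,p\in M(A)$ with $a$ invertible in $M(A)$ and $\sigma=\Ad(a)$. In other words $\sigma$ is an inner automorphism of $A$, which is exactly the claimed conclusion.

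I do not expect a genuine obstacle here, as all the substantive work is already contained in Proposition \ref{propepi} (which in turn rests on Lemma \ref{lemMsigma}, Corollary \ref{corprimeinj} and Mathieu's Theorem \ref{thetaiso}). The only points requiring a moment's care are the verification that $\sigma$, viewed as $\rho$, genuinely meets the hypotheses of that proposition (the same $\sigma$, additive, non-zero), and --- should one wish to extract more --- the observation that the element $p$ produced by the proposition satisfies $\sigma(x)=\sigma(x)p$ for all $x$, so $A(1-p)=\{0\}$ by surjectivity of $\sigma$, forcing $p=1$ by semiprimeness of $M(A)$ and essentiality of $A$ in $M(A)$; this refinement, however, is not needed for the statement itself.
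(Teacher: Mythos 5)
Your argument is exactly the one the paper intends: the corollary is stated as a direct consequence of Proposition \ref{propepi}, obtained by observing that $\rho:=\sigma$ is itself a non-zero left $A$-module $\sigma$-homomorphism and reading off $\sigma=\Ad(a)$ from condition (ii). The proposal is correct, including the (optional) verification that $p=1$.
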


The next fact can be deduced from the proof of \cite[Theorem 4.3]{Gog1}. For completeness, we include a proof.
\begin{lemma}\label{lemteh}
Let $B$ be a unital infinite-dimensional $C^*$-algebra and let $f,g,h : B \to B$ be any functions with $f \neq 0$. Suppose that for all $x \in B$ we have the following 
equality of tensors in $B \otimes_h B$
\begin{equation}\label{funten}
f(x) \otimes 1 = \sum_{n=1}^\infty (a_n g(x)+h(x)a_n) \otimes b_n,
\end{equation}
where $(a_n)$ and $(b_n)$ are sequences in $B$ such that the series $\sum_{n=1}^\infty a_n a_n^*$
and $\sum_{n=1}^\infty b_n^*b_n$ are norm convergent, with $(b_n)$ strongly independent. Then there is a non-zero element $b \in B$ such that 
\begin{equation}\label{funten2}
f(x)= bg(x)+h(x)b
\end{equation}
for all $x \in B$. 
   
\end{lemma}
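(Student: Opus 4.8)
The plan is to exploit that the left leg of the left-hand side of (\ref{funten}) is the single element $f(x)$, tensored against the one fixed vector $1$, and to play this off against the strong independence of $(b_n)$. Throughout, write $c_n(x):=a_ng(x)+h(x)a_n$, so that (\ref{funten}) reads $f(x)\otimes 1=\sum_{n=1}^\infty c_n(x)\otimes b_n$ in $B\otimes_h B$; for each fixed $x$ the series $\sum_n c_n(x)c_n(x)^*$ is norm-convergent, since on expanding $c_n(x)c_n(x)^*$ into four summands each resulting series is dominated via the elementary estimate $\big\|\sum_n a_nc\,a_n^*\big\|\le\|c\|\,\big\|\sum_n a_na_n^*\big\|$ (valid for any $c\in B$, as $\sum_{n=N}^M a_nc\,a_n^*=R\,\mathrm{diag}(c,\dots,c)\,R^*$ with $R=[a_N\ \cdots\ a_M]$). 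The crucial dichotomy is whether or not $1$ lies in $V:=\big\{\sum_{n=1}^\infty\beta_nb_n:(\beta_n)\in\ell^2\big\}$, a well-defined subspace of $B$ because $\big\|\sum_{n=N}^M\beta_nb_n\big\|\le\big(\sum_{n=N}^M|\beta_n|^2\big)^{1/2}\big\|\sum_{n=N}^M b_n^*b_n\big\|^{1/2}$.

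First I would exclude the case $1\notin V$. In that case, adjoin the unit: set $b_0:=1$ and $c_0(x):=-f(x)$. Then $(b_0,b_1,b_2,\dots)$ is still strongly independent (a nonzero coefficient on $b_0$ would place $1$ in $V$, and strong independence of $(b_n)_{n\ge1}$ then kills the rest), the series $\sum_{n\ge0}b_n^*b_n$ is norm-convergent, and so is $\sum_{n\ge0}c_n(x)c_n(x)^*$. Rearranging (\ref{funten}) gives $\sum_{n=0}^\infty c_n(x)\otimes b_n=0$ in $B\otimes_h B$, a representation of the zero tensor of exactly the type appearing in Remark \ref{rep}. Hence Remark \ref{rep}(ii) forces $c_0(x)=0$, i.e. $f(x)=0$, for every $x\in B$, contradicting $f\neq0$. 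Therefore $1\in V$; fix $(\beta_n)\in\ell^2$ with $1=\sum_{n=1}^\infty\beta_nb_n$.

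Since $y\mapsto f(x)\otimes y$ is a bounded map $B\to B\otimes_h B$, substituting this expansion of $1$ into the left-hand side of (\ref{funten}) gives $\sum_{n=1}^\infty\big(\beta_nf(x)-c_n(x)\big)\otimes b_n=0$. The left legs again satisfy the summability condition of Remark \ref{rep}, and $(b_n)$ is strongly independent, so Remark \ref{rep}(ii) yields $a_ng(x)+h(x)a_n=\beta_nf(x)$ for all $n$ and all $x\in B$. As $B\neq0$ we have $1\neq0$, hence $\beta_{n_0}\neq0$ for some $n_0$; then $b:=\beta_{n_0}^{-1}a_{n_0}$ satisfies (\ref{funten2}), and $b\neq0$ since $b=0$ would give $f(x)=\beta_{n_0}^{-1}\big(a_{n_0}g(x)+h(x)a_{n_0}\big)=0$ for all $x$, contradicting $f\neq0$.

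The only genuine work is the packet of routine convergence checks ensuring that the rearranged series still meet the hypotheses of Remark \ref{rep}: the norm-convergence of $\sum_n c_n(x)c_n(x)^*$ and of the cross terms arising after the substitution $1=\sum_n\beta_nb_n$. All of these reduce to the estimate $\big\|\sum_n a_nc\,a_n^*\big\|\le\|c\|\,\big\|\sum_n a_na_n^*\big\|$ together with the Cauchy--Schwarz-type bound $\big\|\sum_n\lambda_na_n\big\|\le\|(\lambda_n)\|_{\ell^2}\big\|\sum_n a_na_n^*\big\|^{1/2}$. A slicker but less self-contained alternative is to apply to (\ref{funten}) a right slice map $B\otimes_h B\to B$ induced by a state $\varphi$ of $B$, producing $f(x)=\sum_n\varphi(b_n)\big(a_ng(x)+h(x)a_n\big)$ with $(\varphi(b_n))\in\ell^2$, so that $b=\sum_n\varphi(b_n)a_n$ works; but this invokes boundedness of Haagerup slice maps, which is not among the tools set up here, so the dichotomy argument using only Remark \ref{rep} seems preferable.
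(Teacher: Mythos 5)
Your argument is correct, and it reaches the paper's intermediate identity $a_ng(x)+h(x)a_n=\beta_nf(x)$ by a genuinely different route at the one non-routine step, namely producing an $\ell^2$-expansion $1=\sum_n\beta_nb_n$. The paper gets this expansion by picking $x_0$ with $f(x_0)\neq0$ and a functional $\varphi$ with $\varphi(f(x_0))\neq0$, then applying the right slice map $R_\varphi:B\otimes_hB\to B$ to (\ref{funten}) at $x=x_0$; complete boundedness of $\varphi$ gives $(\alpha_n)\in\ell^2$ with $\sum_n\alpha_nb_n=1$, after which the substitution-and-strong-independence step is identical to yours. Your dichotomy replaces the slice map entirely: if $1$ were not in the $\ell^2$-span of $(b_n)$, adjoining $b_0=1$ with left leg $-f(x)$ yields a strongly independent representation of the zero tensor, and Remark \ref{rep}(ii) forces $f=0$. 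This is a clean trade-off: you avoid invoking slice maps on the Haagerup tensor product (which the paper does import from Smith's work), at the cost of having to re-verify strong independence of the augmented sequence and a few extra summability checks; the paper's slice-map argument is shorter and also pins down the coefficients explicitly as $\alpha_n=\varphi(a_ng(x_0)+h(x_0)a_n)/\varphi(f(x_0))$, though nothing downstream uses that explicit form. Your convergence estimates (the bound $\|\sum_na_nc\,a_n^*\|\le\|c\|\,\|\sum_na_na_n^*\|$ and the Cauchy--Schwarz bound for $\sum_n\lambda_na_n$) are the right ones and suffice to place every rearranged series within the scope of Remark \ref{rep}, so there is no gap.
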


\begin{proof}
Choose $x_0 \in B$ such that $f(x_0)\neq 0$ and let  $\varphi \in B^*$ be an arbitrary bounded linear functional such that
$\varphi(f(x_0))\neq 0$. If for $x=x_0$ we act on the equality (\ref{funten}) with
the right slice map $R_\varphi :  B \otimes_h B \to B$, $R_\varphi : a \otimes b \mapsto \varphi(a)b$ (see e.g. \cite[Section 4]{Smi}), we obtain
\begin{equation}\label{pom}
\varphi(f(x_0)) \cdot 1=\sum_{n=1}^\infty \varphi(a_n g(x_0) + h(x_0)a_n)
b_n.
\end{equation}
For $n \in \N$ let
$$\alpha_n:=\frac{\varphi(a_ng(x_0)+h(x_0)a_n)}{\varphi(f(x_0))}.$$
Note that $(\alpha_n) \in \ell^2$, since all bounded linear functionals on $C^*$-algebras are completely bounded (see e.g. \cite[Proposition 3.8]{Paul}) and the series
$\sum_{n=1}^\infty (a_ng(x_0)+h(x_0)a_n)(a_ng(x_0)+h(x_0)a_n)^*$ is norm convergent. Then (\ref{pom}) can be rewritten as $\sum_{n=1}^\infty \alpha_n b_n=1$, so  by (\ref{funten}) we have 
$$\sum_{n=1}^\infty (\alpha_n f(x)-a_n g(x) - h(x)a_n)\otimes b_n=0$$
for all $x \in B$. Consequently, since $(b_n)$ is strongly independent, Remark \ref{rep} (ii) implies that
$$
\alpha_nf(x)= a_n g(x) + h(x)a_n
$$
for all $n \in \N$ and $x \in B$. Since $\sum_{n=1}^\infty \alpha_n b_n=1$, there is some $n_0 \in \N$ such that
$\alpha_{n_0} \neq 0$. If $b:=(1/\alpha_{n_0})a_{n_0}$, then the above equation is obviously equivalent to (\ref{funten2}). Also, $b \neq 0$ since $f \neq 0$.
\end{proof}

\begin{proof}[Proof of Theorem \ref{main}] 

(ii) $\Longrightarrow$ (i). This is trivial (see also Remark \ref{leftmul}).

(i) $\Longrightarrow$ (ii). Assume that $d\in \overline{\overline{\El(A)}}_{cb}$ and that $d\neq M_{l,1}$ for all $l \in M(A)$. In particular $d \neq 0$. Using the same arguments from the beginning of the proof of Proposition \ref{propepi}, we see that there is a tensor $t \in M(A) \otimes_h M(A)$ such that 
$d=\Theta_A(t)$. By Remark \ref{rep}, we can write
$t=\sum_{n=1}^\infty a_n \otimes b_n$, where $(a_n)$ and
$(b_n)$ are sequences of $M(A)$ such that the series $\sum_{n=1}^\infty a_n a_n^*$ and $\sum_{n=1}^\infty b_n^*b_n$ are norm convergent, with $(b_n)$ strongly independent. Using (\ref{gsder}) for all $x,y \in A$ we get
$$\d(x)y  = \sum_{n=1}^\infty (a_n x - \s(x)a_n)y b_n,$$
or equivalently
\begin{equation}\label{Thetaeq}
\Theta_A (\d(x) \otimes 1)) = \Theta_A \left(\sum_{n=1}^\infty (a_n x - \s(x)a_n) \otimes b_n\right).
\end{equation}
Again, since $\Theta_A$ is isometric (hence injective), (\ref{Thetaeq}) is equivalent to the equality 
$$\d(x) \otimes 1 = \sum_{n=1}^\infty (a_n x - \s(x)a_n) \otimes b_n$$
of tensors in $M(A) \otimes_h M(A)$ for all $x \in A$. If $\delta=0$, then $d$ must be a non-zero left $A$-module $\s$-homomorphism of $A$ (see Remark \ref{dpr}) so the claim follows directly from Proposition \ref{propepi}. If $\delta\neq 0$,
Lemma \ref{lemteh} implies that there is a non-zero element $b \in M(A)$ such that
$$\delta(x)=bx-\s(x)b$$
for all $x \in A$. If we decompose $d=\delta+\rho$ as in Remark \ref{dpr}, the map $\rho' : A \to A$ defined by  $$\rho'(x):=\rho(x)-\s(x)b=d(x)-bx$$ is obviously a left $A$-module $\s$-homomorphism of $A$ that lies in $\overline{\overline{\El(A)}}_{cb}$ (since $d$ does). Since, by assumption, $d$ is not a left multiplication, $\rho'$ is non-zero. Hence, by Proposition \ref{propepi} there are elements $a,p \in M(A)$ with $a$ invertible and $p \neq 0$ such that $\s=\Ad(a)$ and
$\rho'(x)=\s(x)p$ for all $x \in A$. In particular, if we put $b':=b+p$, we get $\rho(x)=\s(x)b'$, which completes the proof.
\end{proof}

\section{Counterexamples and further remarks}\label{remarks}

In \cite{Gog1, Gog2} we considered derivations of unital $C^*$-algebras $A$ that lie in $\overline{\overline{\El(A)}}_{cb}$. We showed that all such derivations are inner in a case when $A$ is prime \cite[Theorem~4.3]{Gog1} or central \cite[Theorem~5.6]{Gog1}, or more generally, when $A$ is a unital $C^*$-algebra whose every Glimm ideal is prime \cite[Theorem~1.5]{Gog2}. 

In light of this, it is natural to ask if one can extend Corollary \ref{primeinnaut} in its original form (and consequently Theorem \ref{main}) for  similar classes of $C^*$-algebras. However, this will not be possible, even for relatively well-behaved $C^*$-algebras like homogeneous $C^*$-algebras. In fact, we will now show that for all $n \geq 2$, a $C^*$-algebra $A_n=C(PU(n),\MM_n)$, where $PU(n)=U(n)/\mathbb{S}^1$ is the projective unitary group, admits outer automorphisms which are simultaneously elementary operators on $A_n$ (Proposition \ref{homogcount}).

\smallskip

In order to show this, first suppose that  $A$ is a general separable $n$-homogeneous $C^*$-algebra (i.e. all irreducible representations of $A$ have the same finite dimension $n$). Then by \cite[Theorem~4.2]{Kap} the primitive spectrum $X:=\mathrm{Prim}(A)$ is a (locally compact) Hausdorff space and by a well-known theorem of Fell \cite[Theorem~3.2]{Fell} and Tomiyama-Takesaki \cite[Theorem~5]{TT} there is a locally trivial bundle $\CE $ over $X$ with fibre $\MM_n$ and structure group $\mathrm{Aut}^*(\mathbb{M}_n)\cong PU(n)$ such that $A$ is isomorphic to the
$C^*$-algebra $\Gamma_0(\CE )$ of continuous sections of $\CE $ that vanish at infinity. Moreover, any two such algebras $A_i=\Gamma_0(\CE_i )$ with primitive
spectra $X_i$ ($i =1,2$)
are isomorphic if and only if there is a homeomorphism
$f : X_1 \to X_2$ such that $\CE _1 \cong f^*(\CE _2)$ (the pullback bundle)
as bundles over $X_1$ (see \cite[Theorem~6]{TT}). Thus, we may identify $A$ with $\Gamma_0(\CE )$. Further, by Dauns-Hofmann theorem \cite[Theorem A.34]{RW} we can identify $Z(A)$ with $C_0(X)$ and $Z(M(A))$ with $C_b(X)$.

Let us denote by  $\mathrm{Aut}_{Z}^*(A)$ the set of all $Z(M(A))$-linear $*$-automorphisms of $A$ (i.e. $\s(x^*)=\s(x)^*$ and $\s(zx)=z\s(x)$ for all $z \in Z(M(A))$ and $x \in A$) and by $\mathrm{InnAut}^*(A)$ the set of automorphisms $\s$ of $A$ that are of the form $\s=\mathrm{Ad}(u)$, for some unitary element $u \in M(A)$. Obviously $\mathrm{InnAut}^*(A) \subseteq \mathrm{Aut}_{Z}^*(A)$. It is a very interesting (and non-trivial) problem to describe when do we have $\mathrm{Aut}_{Z}^*(A)=\mathrm{InnAut}^*(A)$ (see e.g. \cite{MsSmi,Lan,PR,PRT} for some results regarding this question). In particular, we have the following consequence of \cite[Theorem~2.1]{PR} (see also \cite[2.19]{PR}):

\begin{theorem}[Phillips-Raeburn]\label{ExSec}
If $A=\Gamma_0(\CE )$ is a separable $n$-homogeneous $C^*$-algebra with primitive spectrum $X$, we have the exact sequence 
$$
0  \longrightarrow  \mathrm{InnAut}^*(A) \longrightarrow \mathrm{Aut}^*_{Z} (A) \stackrel{\eta}\longrightarrow \check{H}^2(X;\Z)
$$
of abelian groups, where $\check{H}^2(X;\Z)$ is the second integral \v Cech cohomology group of $X$. Further, the image of $\eta$ is contained in the torsion subgroup of $\check{H}^2(X;\Z)$.
\end{theorem}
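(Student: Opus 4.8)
The plan is to translate the statement into the geometry of the bundle $\CE$ and then feed a central extension of fibre bundles of groups into a (non-abelian) \v{C}ech cohomology exact sequence, following the strategy of Phillips and Raeburn. Write $P$ for the principal $PU(n)$-bundle underlying $\CE$, and form the associated bundles of groups $\mathcal{U}:=P\times_{PU(n)}U(n)$ and $\mathcal{A}:=P\times_{PU(n)}PU(n)$, where $PU(n)$ acts on $U(n)$ and on itself by conjugation; since $\mathrm{Aut}^*(\MM_n)\cong PU(n)$, the fibre of $\mathcal{A}$ over $x\in X$ is canonically $\mathrm{Aut}^*(\CE_x)$. The first and, I expect, most delicate step is to establish the two identifications that give the statement its content: (a) that $M(A)\cong\Gamma_b(\CE)$, so that the unitary group of $M(A)$ is exactly $\Gamma(X;\mathcal{U})$, the bounded continuous unitary sections of $\CE$; and (b) that every $Z(M(A))$-linear $*$-automorphism $\s$ of $A=\Gamma_0(\CE)$ is fibrewise, i.e.\ $\s(a)(x)=\s_x(a(x))$ for a unique $\s_x\in\mathrm{Aut}^*(\CE_x)$ --- here one uses that, by Dauns--Hofmann, $Z(M(A))\cong C_b(X)$ separates the fibres of $\CE$ --- and that $x\mapsto\s_x$ is then a continuous section of $\mathcal{A}$, every section arising in this way. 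Granting this, $\mathrm{Aut}^*_Z(A)\cong\Gamma(X;\mathcal{A})$ as groups, and $\mathrm{InnAut}^*(A)$ corresponds to the image of the map $\Gamma(X;\mathcal{U})\to\Gamma(X;\mathcal{A})$ induced by $u\mapsto\Ad(u)$.

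Next I would exploit the central extension $1\to\mathbb{S}^1\to U(n)\to PU(n)\to 1$: since $\mathbb{S}^1$ is central in $U(n)$ it is fixed by conjugation, so this globalises to a short exact sequence of sheaves of groups on $X$,
\[ 1\longrightarrow\underline{\mathbb{S}^1}\longrightarrow\mathcal{U}\longrightarrow\mathcal{A}\longrightarrow 1, \]
in which $\underline{\mathbb{S}^1}$ --- the sheaf of germs of continuous $\mathbb{S}^1$-valued functions, equivalently the trivial bundle $X\times\mathbb{S}^1$ --- is a \emph{central} abelian subsheaf. The associated long exact sequence of pointed cohomology sets contains
\[ \Gamma(X;\mathcal{U})\longrightarrow\Gamma(X;\mathcal{A})\stackrel{\delta}{\longrightarrow}H^1(X;\underline{\mathbb{S}^1}), \]
exact at $\Gamma(X;\mathcal{A})$: a section $s$ of $\mathcal{A}$ lifts to a \emph{global} section of $\mathcal{U}$ precisely when $\delta(s)=0$. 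Because the kernel sheaf is central and abelian, $H^1(X;\underline{\mathbb{S}^1})$ is an abelian group and $\delta$ is a homomorphism of groups; hence $\mathrm{InnAut}^*(A)=\ker\delta$ is a normal subgroup of $\mathrm{Aut}^*_Z(A)$ whose quotient embeds, via $\delta$, into an abelian group.

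To identify the target with $\check H^2(X;\Z)$ I would invoke the standing hypotheses. Separability of $A$ makes $X=\mathrm{Prim}(A)$ second countable, and it is locally compact Hausdorff by \cite[Theorem~4.2]{Kap}, hence metrizable and paracompact; so \v{C}ech and sheaf cohomology agree on $X$, and the sheaf $\underline{\R}$ of germs of continuous real-valued functions is soft, whence $H^k(X;\underline{\R})=0$ for all $k\ge 1$. The exponential sequence $0\longrightarrow\Z\longrightarrow\underline{\R}\longrightarrow\underline{\mathbb{S}^1}\longrightarrow 0$ (the surjection being $f\mapsto e^{2\pi i f}$) then yields a natural isomorphism $H^1(X;\underline{\mathbb{S}^1})\xrightarrow{\ \sim\ }H^2(X;\Z)=\check H^2(X;\Z)$, and I would \emph{define} $\eta$ to be $\delta$ followed by this isomorphism. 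Exactness of the asserted sequence at $\mathrm{Aut}^*_Z(A)$ is then the content of the previous paragraph, and exactness at $\mathrm{InnAut}^*(A)$ is just injectivity of the inclusion.

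Finally, for the torsion statement I would use the determinant. Since $\det(wvw^{-1})=\det(v)$ on $U(n)$, the map $\det$ descends to a bundle morphism $\det\colon\mathcal{U}\to X\times\mathbb{S}^1$, and it is twisted by $n$ on the centre: $\det(\zeta v)=\zeta^{\,n}\det(v)$ for $\zeta\in\mathbb{S}^1$. Given $s\in\Gamma(X;\mathcal{A})$, choose an open cover $\{U_i\}$ of $X$ over which $s$ lifts to sections $\tilde s_i\in\Gamma(U_i;\mathcal{U})$ --- possible because $\mathcal{U}\to\mathcal{A}$ is a locally trivial principal $\underline{\mathbb{S}^1}$-bundle --- so that $g_{ij}:=\tilde s_i\tilde s_j^{-1}\in\underline{\mathbb{S}^1}(U_i\cap U_j)$ is a \v{C}ech $1$-cocycle representing $\delta(s)$. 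Applying $\det$ gives $\det\tilde s_i=g_{ij}^{\,n}\det\tilde s_j$ on $U_i\cap U_j$, so $\{g_{ij}^{\,n}\}$ is the coboundary of the $0$-cochain $\{\det\tilde s_i\}$; that is, $n\cdot\delta(s)=0$. Transporting through the isomorphism above, the image of $\eta$ is annihilated by $n$, in particular it is contained in the torsion subgroup of $\check H^2(X;\Z)$. The main obstacle I anticipate is concentrated in the first step --- the continuity of $x\mapsto\s_x$ and the precise description of $M(A)$ and of its unitary group as sections of $\CE$ --- together with the care needed to run the non-abelian cohomology sequence, where ``exactness'' must be read in the weak sense appropriate to pointed sets and cosets.
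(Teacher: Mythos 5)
This statement is quoted in the paper as a known result of Phillips and Raeburn (\cite[Theorem~2.1 and 2.19]{PR}) and is given no proof there, so there is nothing internal to compare your argument against; what you have written is, in effect, a blind reconstruction of the Phillips--Raeburn argument itself, and it is essentially the correct one. The architecture is sound: identifying $M(A)$ with $\Gamma_b(\CE)$ and $\mathrm{Aut}^*_Z(A)$ with the continuous sections of the bundle of groups $P\times_{PU(n)}PU(n)$, then running the connecting map of the central extension of sheaves coming from $1\to\mathbb{S}^1\to U(n)\to PU(n)\to 1$, and finally converting $\check H^1(X;\underline{\mathbb{S}^1})$ into $\check H^2(X;\Z)$ via the exponential sequence (legitimate here because $X$ is second countable locally compact Hausdorff, hence paracompact, so the fine sheaf $\underline{\R}$ is acyclic and \v Cech and sheaf cohomology agree). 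Your determinant computation for the torsion claim is correct and in fact proves the sharper statement that the image of $\eta$ is annihilated by $n$, which is what \cite{PR,PRT} obtain. The two points you rightly single out as delicate --- that a $Z(M(A))$-linear $*$-automorphism acts fibrewise with $x\mapsto\s_x$ continuous, and that $M(\Gamma_0(\CE))=\Gamma_b(\CE)$ --- are genuine obligations but are standard for locally trivial $\MM_n$-bundles (the fibrewise action follows from ideal preservation, exactly as in Remark~\ref{homogidpres} of this paper, and continuity is checked in local trivializations). One small caveat: as stated the sequence is called one ``of abelian groups,'' whereas $\mathrm{Aut}^*_Z(A)\cong\Gamma(X;P\times_{PU(n)}PU(n))$ need not be abelian; the substantive content, which your argument delivers, is that $\eta$ is a homomorphism (because the kernel sheaf is central) with kernel exactly $\mathrm{InnAut}^*(A)$, so the quotient embeds in the abelian group $\check H^2(X;\Z)$.
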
 

\begin{proposition}\label{homogcount}
For $n  \geq 2$ let  $A_n=C(PU(n),\MM_n)$. Then every derivation of $A_n$ is inner, but $A_n$ admits an outer automorphism that is also an elementary operator.  
\end{proposition}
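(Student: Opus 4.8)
The first assertion needs nothing new: $A_n=C(PU(n),\MM_n)$ is a unital $n$-homogeneous $C^*$-algebra, so all of its derivations are inner by \cite[Theorem~1]{Spr}. For the second assertion the plan is to exhibit a concrete automorphism built from the quotient map $q\colon U(n)\to PU(n)$. Over a neighbourhood of each point of $PU(n)$ choose a continuous local section $u(\cdot)$ of $q$ (a local lift of $\mathrm{id}_{PU(n)}$) and set $\sigma(f)(x):=u(x)f(x)u(x)^{*}$. Since any two lifts of $x$ differ by a scalar in $\mathbb{S}^1$, which $\Ad$ annihilates, $\sigma$ does not depend on the local choices and is a well-defined $*$-automorphism of $A_n$, visibly $Z(M(A_n))$-linear, so $\sigma\in\mathrm{Aut}^*_Z(A_n)$. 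It then remains to check that $\sigma$ is an elementary operator and that $\sigma$ is not inner.

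The first of these — the crux of the argument — I would handle as follows. Although the entries $u(x)_{ab}$ of a local lift do not patch globally, the quadratic combinations $g_{abcd}(x):=u(x)_{ab}\,\overline{u(x)_{cd}}$ are unchanged when $u(x)$ is replaced by $\lambda u(x)$ with $\lambda\in\mathbb{S}^1$, and hence glue to well-defined functions $g_{abcd}\in C(PU(n))$. Expanding $u(x)f(x)u(x)^{*}=\sum_{a,b,c,d}g_{abcd}(x)\,E_{ab}f(x)E_{dc}$ and putting $\alpha_{abcd}:=g_{abcd}\otimes E_{ab}\in C(PU(n))\otimes\MM_n=A_n$ and $\beta_{abcd}:=1\otimes E_{dc}\in A_n$, one reads off the \emph{finite} sum $\sigma=\sum_{a,b,c,d}M_{\alpha_{abcd},\,\beta_{abcd}}\in\El(A_n)$. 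Equivalently: although the lift $u$ exists only locally, the $\MM_n\otimes\MM_n$-valued function $x\mapsto u(x)\otimes u(x)^{*}$ is $\mathbb{S}^1$-invariant, hence globally defined, and it is this global datum, viewed as an element of $A_n\otimes A_n$, that realizes $\sigma$ as a (finite) elementary operator via $\Theta_{A_n}$.

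For outerness, suppose $\sigma=\Ad(v)$ with $v$ a unitary in $M(A_n)=A_n$ (a $*$-automorphism implemented by an invertible element is also implemented by a unitary, since then $v^{*}v$ is central). Evaluating at a point $x$ and letting $f(x)$ run over all of $\MM_n$ yields $\Ad(v(x))=\Ad(u(x))$ in $\mathrm{Aut}^*(\MM_n)$, whence $v(x)^{*}u(x)\in\mathbb{S}^1$ and therefore $q(v(x))=x$. Thus $v$ would be a continuous section of the principal $\mathbb{S}^1$-bundle $q\colon U(n)\to PU(n)$, making it trivial and forcing $\pi_1(U(n))\cong\pi_1(PU(n))\oplus\Z\cong\Z_n\oplus\Z$; but $\pi_1(U(n))\cong\Z$ via $\det$, which is impossible for $n\geq 2$. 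Hence $\sigma$ is outer. (Alternatively one may invoke Theorem~\ref{ExSec}: $\sigma\in\mathrm{Aut}^*_Z(A_n)$ and $\eta(\sigma)$ is the class of the bundle $U(n)\to PU(n)$, a non-zero torsion element of $\check{H}^2(PU(n);\Z)$, so by exactness $\sigma\notin\mathrm{InnAut}^*(A_n)$.)

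The step I expect to carry all the weight is the elementary-operator representation: the key realization is that, even though the lift $u$ is only a local object, the $\mathbb{S}^1$-invariant ``square'' $u\otimes u^{*}$ is global, and this is exactly what promotes a merely \emph{locally} inner automorphism to a genuine — indeed finite — element of $\El(A_n)$. Once that is in place the outerness is routine bundle theory. Conceptually, this is precisely the obstruction to extending Corollary~\ref{primeinnaut} beyond prime (or Glimm-prime) $C^*$-algebras: a homogeneous $C^*$-algebra can carry automorphisms that are locally inner, hence elementary, but globally outer.
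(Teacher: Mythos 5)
Your proof is correct, but it takes a genuinely different and more explicit route than the paper's. The paper does not construct the automorphism by hand: it quotes \cite[Section~IV]{KR} for the existence of some $\sigma\in\mathrm{Aut}^*_{Z}(A_n)\setminus\mathrm{InnAut}^*(A_n)$ (using $\check{H}^2(PU(n);\Z)=\Z_n$ and the Phillips--Raeburn sequence of Theorem~\ref{ExSec}), and then obtains $\sigma\in\El(A_n)$ abstractly from Magajna's theorem \cite[Theorem~1.1]{Mag} via Remark~\ref{homogidpres} (bounded $Z(A_n)$-linear maps on a unital homogeneous $C^*$-algebra preserve closed ideals, hence are elementary); outerness is then built into the choice of $\sigma$, after the same reduction from an invertible to a unitary implementing element that you use. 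You instead build $\sigma$ from local lifts of $\mathrm{id}_{PU(n)}$ through $q\colon U(n)\to PU(n)$, exhibit it as the \emph{finite} sum $\sum_{a,b,c,d}M_{g_{abcd}E_{ab},\,E_{dc}}$ using the $\mathbb{S}^1$-invariance of $g_{abcd}=u_{ab}\overline{u_{cd}}$ (correct: these functions glue globally even though $u$ does not), and prove outerness directly: a unitary $v$ with $\Ad(v)=\sigma$ would be a global section of the principal $\mathbb{S}^1$-bundle $U(n)\to PU(n)$, forcing $\pi_1(U(n))\cong\Z_n\oplus\Z$, which contradicts $\pi_1(U(n))\cong\Z$ for $n\geq 2$. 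Your argument is self-contained, bypassing both Magajna's theorem and the cohomological machinery, and it makes the ``locally inner but globally outer'' mechanism completely transparent; what the paper's approach buys is generality, since Remark~\ref{homogidpres} shows that \emph{every} element of $\mathrm{Aut}^*_{Z}(A)$ of any unital homogeneous $C^*$-algebra is an elementary operator, and the \v Cech-cohomological framework identifies exactly which spectra admit such outer examples, which the surrounding discussion (e.g.\ Proposition~\ref{loccontr}) relies on. (Your parenthetical alternative via $\eta(\sigma)$ being the class of the bundle $U(n)\to PU(n)$ is also fine, though it requires unwinding the Phillips--Raeburn construction; the $\pi_1$ argument stands on its own.)
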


\begin{remark}\label{homogidpres}
A $C^*$-algebra $A$ is said to be \textit{quasicentral} if any element $a \in A$ can be decomposed as $a=zb$ for some $b \in A$ and $z \in Z(A)$ (see \cite{Del,Arc,Gog1} for other characterizations of such algebras). If every closed ideal of $A$ is quasicentral, note that any $Z(A)$-linear map $\phi : A \to A$ preserves all closed ideals of $A$ (i.e. $\phi(I)\subseteq I$ for any such ideal $I$). 
Indeed, since any $a \in I$ can be decomposed as  $a=zb$, with $z \in Z(I)$ and $b \in I$ and since $Z(I)\subseteq Z(A)$, we have $\phi(a)=\phi(zb)=z\phi(b) \in I$. 

This observation in particular applies to $n$-homogeneous $C^*$-algebras $A\cong \Gamma_0(\CE )$. Indeed, using the fact that the bundle $\CE$ is locally trivial one can easily check that $n$-homogeneous $C^*$-algebras are quasicentral. 
Also, every closed  ideal of an $n$-homogeneous $C^*$-algebra is also an $n$-homogeneous $C^*$-algebra, hence quasicentral.  Further, if $A$ is unital (and $n$-homogeneous), every bounded $Z(A)$-linear map $\phi : A \to A$ is an elementary operator on $A$. This follows directly from the above observation and Magajna's theorem \cite[Theorem~1.1]{Mag}. Therefore, for every unital $n$-homogeneous $C^*$-algebra $A$ we have $\mathrm{Aut}^*_{Z} (A)\subseteq \El(A)$.
\end{remark}

\begin{proof}[Proof of Proposition \ref{homogcount}]
That all derivations of $A_n$ are inner follows from \cite[Theorem~1]{Spr}. On the other hand, by \cite[Section~IV]{KR} $A_n$ admits an automorphism $\s \in \mathrm{Aut}^*_{Z} (A_n) \setminus  \mathrm{InnAut}^*(A_n)$ (note that $\check{H}^2(PU(n);\Z)=\Z_n$, so all elements of $\check{H}^2(PU(n);\Z)$ are torsion elements). By Remark \ref{homogidpres}, $\s\in \El(A_n)$. Suppose that $\s=\mathrm{Ad}(a)$ for some $a \in A_n^\times$.  Then, since $\s$ is $*$-preserving, for all $x \in A_n$ we have
$$ax^*a^{-1}=\s(x^*)=\s(x)^* =(a^{*})^{-1}x^*a^*.$$
Hence $a^{*}a \in Z(A_n)$, so $|a|=\sqrt{a^*a} \in Z(A_n)$. Therefore, if $u:=|a|^{-1}a$, then $u$ is a unitary element of $A_n$ and $\sigma=\mathrm{Ad}(u)\in \mathrm{InnAut}^*(A_n)$; a contradiction. 
\end{proof}

On the other hand, if $A_n=C(PU(n), \MM_n )$ as before, every $\sigma \in  \mathrm{Aut}^*_{Z} (A_n)$ is implemented by some unitary element of $Q_b(A)$. This follows from the following fact:

\begin{proposition}\label{loccontr}
Let $A=\Gamma_0(\CE )$ be a separable $n$-homogeneous $C^*$-algebra whose primitive spectrum $X$ is locally contractable. Then for every $\sigma \in\mathrm{Aut}^*_{Z} (A)$ there is a unitary element $u \in Q_b(A)$ such that $\s=\mathrm{Ad}(u)$.
\end{proposition}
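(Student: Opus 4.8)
The plan is to \emph{localize}: I will find a dense open set $U=\bigsqcup_k U_k\subseteq X$ with each $U_k$ contractible, implement $\sigma$ on each ideal $I_{U_k}$ by an honest unitary multiplier via Theorem~\ref{ExSec}, glue these into a unitary of the essential ideal $I_U$, and finally promote $\mathrm{Ad}(u)=\sigma$ on $I_U$ to $\mathrm{Ad}(u)=\sigma$ on all of $A$. Throughout, for an open $V\subseteq X$ I write $I_V$ for the corresponding closed ideal of $A$; it is $*$-isomorphic to $\Gamma_0(\CE|_V)$, a separable $n$-homogeneous $C^*$-algebra with primitive spectrum $V$. I will use two preliminary facts. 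First, by the Dauns--Hofmann theorem the members of $\mathrm{Aut}^*_Z(\cdot)$ are exactly the $*$-automorphisms that fix the primitive spectrum pointwise; consequently $\sigma$ is $Z(A)$-linear, so by the quasicentrality of $n$-homogeneous $C^*$-algebras (Remark~\ref{homogidpres}) it preserves every $I_V$, and the restriction $\sigma|_{I_V}$ again lies in $\mathrm{Aut}^*_Z(I_V)$. Second, when $V$ is dense the ideal $I_V$ is essential in $A$, so (as in the proof of Lemma~\ref{lemMsigma}, via property (iv) of $Q_s(A)$ and boundedness of multipliers) $M(I_V)$ is a unital $*$-subalgebra of $Q_b(A)$; and a unitary $u\in Q_b(A)$ with $uxu^*=\sigma(x)$ for all $x$ in an essential ideal $J$ with $\sigma(J)=J$ automatically satisfies $uxu^*=\sigma(x)$ for \emph{all} $x\in A$, since for $x\in A$ and $y\in J$ we get $(\sigma(x)-uxu^*)\,uyu^*=\sigma(xy)-u(xy)u^*=0$, while $uJu^*=\sigma(J)=J$ is essential, so property (iii) of $Q_s(A)$ forces $\sigma(x)=uxu^*$.

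The topological ingredient is the existence of a dense open $U\subseteq X$ that is a disjoint union of contractible open sets. Since $X$ is locally contractable, its contractible open subsets form a basis, so a Zorn's-lemma argument yields a maximal family $\{U_k\}$ of pairwise disjoint nonempty contractible open sets; maximality forces $U:=\bigcup_k U_k$ to be dense, for otherwise a basic contractible open set lying in the complement of $\overline{U}$ could be adjoined. Because $X=\mathrm{Prim}(A)$ is second countable ($A$ being separable), this family is countable.

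Now fix such a $U=\bigsqcup_k U_k$. Each $U_k$ is contractible and, being open in $X$ (which is second countable, locally compact and Hausdorff, hence metrizable and paracompact), is itself paracompact, so $\check H^2(U_k;\Z)=0$. Applying Theorem~\ref{ExSec} to the separable $n$-homogeneous $C^*$-algebra $I_{U_k}$, with the automorphism $\sigma|_{I_{U_k}}\in\mathrm{Aut}^*_Z(I_{U_k})$, exactness produces a unitary $u_k\in M(I_{U_k})$ with $\sigma|_{I_{U_k}}=\mathrm{Ad}(u_k)$. Since the $U_k$ are clopen in $U$, the ideal $I_U$ is the $c_0$-direct sum of the $I_{U_k}$, and hence $M(I_U)$ is the $\ell^\infty$-direct sum of the $M(I_{U_k})$; the (uniformly bounded) family $(u_k)_k$ thus determines a unitary $u\in M(I_U)\subseteq Q_b(A)$. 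As $\sigma$ preserves each $I_{U_k}$, it respects this direct-sum decomposition, so $\sigma(a)=uau^*$ holds for every $a$ in the dense subalgebra $\sum_k I_{U_k}$ of $I_U$, hence on all of $I_U$ by continuity. Applying the essentiality argument from the first paragraph with $J=I_U$ (which is essential, as $U$ is dense) gives $\sigma=\mathrm{Ad}(u)$ on $A$ with $u$ a unitary of $Q_b(A)$, as required.

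The step I expect to demand the most care is the final promotion of $\mathrm{Ad}(u)=\sigma$ from the essential ideal $I_U$ to all of $A$: one must keep in mind that $\mathrm{Ad}(u)$ is a priori only an automorphism of $Q_b(A)$, and lean on the defining properties of $Q_s(A)$ (together with $\sigma$-invariance of $I_U$) to see that it restricts to $A$ and agrees there with $\sigma$; there is also some bookkeeping in keeping every object inside $Q_b(A)$ and in checking that $\sigma|_{I_V}$ really does land in $\mathrm{Aut}^*_Z(I_V)$, for which the spectral (Dauns--Hofmann) description of $Z(M(\cdot))$-linearity is the cleanest tool. By contrast, the topological lemma, although it is precisely where local contractability is used, is comparatively soft.
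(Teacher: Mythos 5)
Your proposal is correct and follows essentially the same route as the paper: a Zorn's lemma argument produces a dense open $U\subseteq X$ that is a disjoint union of contractible open sets, Theorem~\ref{ExSec} implements $\sigma$ by a unitary multiplier of the essential ideal $I_U$, and essentiality promotes $\mathrm{Ad}(u)=\sigma$ to all of $A$ with $u\in Q_b(A)$. The only (cosmetic) difference is that you apply Phillips--Raeburn to each piece $I_{U_k}$ and glue the resulting unitaries in $M(I_U)=\bigoplus^{\ell^\infty}_k M(I_{U_k})$, whereas the paper applies it once to $I_U$ after observing that $\check{H}^2(U;\Z)=0$.
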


\begin{proof}
We first show that there is a dense open subset $U$ of $X$ such that $\check{H}^2(U;\Z)=0$. This can be shown by using the similar arguments as in the proof of \cite[Lemma~3.1]{Gog4}. Indeed, let $\mathfrak{F}$ be a collection of all families $\mathcal{V}$ consisting of mutually disjoint open contractable subsets of $X$. We use the standard set-theoretic inclusion for
partial ordering. If $\mathfrak{C}$ is a chain in $\mathfrak{F}$, then obviously $\bigcup \mathfrak{C}$ is an upper bound of $\mathfrak{C}$ in $\mathfrak{F}$. Therefore, applying
Zorn's lemma, we obtain a maximal family $\mathcal{M}$ in $\mathfrak{F}$. Let $U$ be the union of all members of $\mathcal{M}$. Since $U$ is a disjoint union of contractable spaces, we have $\check{H}^2(U;\Z)=0$.
Since $X$ is locally contractable (and regular), the maximality of $\mathcal{M}$ implies that $U$ is a dense (evidently open) subset of $X$. 

Now let $I:=\Gamma_0(\CE|_U )$, where $\CE|_U $ is a restriction bundle of $\CE$ to $U$. Since $U$ is dense in $X$, $I$ is an essential closed ideal of $A$. If $\s \in \mathrm{Aut}^*_{Z} (A)$, by Remark \ref{homogidpres} we have $\s(I)\subseteq I$, so $\s|_I \in \mathrm{Aut}^*_{Z} (I)$. Since $\PP(I)=U$ and $\check{H}^2(U;\Z)=0$, by Theorem \ref{ExSec} there is a unitary element $u \in M(I)$ such that $\s(x)=uxu^*$ for all $x \in I$. Since $I$ is an essential ideal of $A$, we also have $\s(x)=uxu^*$ for all $x \in A$. If we define $\cL_u, \cR_u : I \to A$ by 
$\cL_u(x)=xu$ and $\cR_u(x)=ux$, then obviously $(\cL_u,\cR_u,I)$ is a bounded 
essentially defined double centralizer of $A$, so $u \in Q_b(A)$ and $\s=\mathrm{Ad}(u)$.
\end{proof}

\begin{problem}
Can we omit the assumption of local contractibility of the space $X$ in  Proposition \ref{loccontr}, that is if $A$ is a general separable $n$-homogeneous $C^*$-algebra, are all automorphisms $\sigma \in \mathrm{Aut}^*_{Z} (A)$ of the form $\s=\mathrm{Ad}(u)$ for some unitary  $u \in Q_b(A)$? 
\end{problem}

\smallskip 

In \cite[Example~6.1]{Gog1} we gave an example of a unital separable $C^*$-algebra $A$ which admits outer derivations that are also elementary operators. We now show that the same $C^*$-algebra admits outer automorphisms that are also elementary operators:

\begin{proposition}\label{Ainf} Let $B:=C([1,\infty],\MM_2)$ be a $C^*$-algebra that consists of all continuous functions from the extended interval $[1,\infty]$ to the $C^*$-algebra $\MM_2$. If $A$ is a $C^*$-subalgebra of $B$ that consists of all $a \in B$ such that
$$a(n)=\left[ \begin{array}{cc}
               \lambda_n(a) & 0 \\
              0 & \lambda_{n+1}(a)\\
               \end{array} \right] \qquad   (n \in \N)$$
for some convergent sequence $(\lambda_n(a))$ of complex
numbers, then $A$ admits an outer automorphism which is also an elementary operator on $A$.
\end{proposition}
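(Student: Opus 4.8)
The plan is to produce the automorphism explicitly as $\sigma=\Ad(u)$ for a cleverly chosen unitary $u$ and then to check by hand that it lands in $\Aut(A)$, is outer, and is elementary.

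First some structural bookkeeping. Since the constant function $1$ lies in $A$, the algebra $A$ is unital, so $M(A)=A$; moreover $Z(A)$ is exactly the algebra of scalar functions $z\in C([1,\infty])$ with $z(1)=z(2)=\cdots=z(\infty)$, while the relative commutant of $A$ inside $B$ is the full algebra $C([1,\infty])\cdot 1$ of scalar functions (at each non‑integer $t$ the algebra $A$ surjects onto $\MM_2$, which forces scalarity there; at the integers diagonality plus continuity again forces scalarity). Consequently two unitaries of $B$ implement the same automorphism of $A$ iff they differ by a scalar-valued function, so $\sigma=\Ad(u)$ is inner iff $u=wv$ for a unitary $v\in A$ and a scalar unitary $w\in C([1,\infty])$. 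For the candidate I would fix reals $\theta_n\to 0$ and set $u(t)=\diag(1,e^{i\vartheta(t)})$ for a continuous $\vartheta\colon[1,\infty]\to\R$ with $\vartheta(n)=\theta_n$, $\vartheta(\infty)=0$; then $u$ is a continuous unitary of $B$ that is diagonal at every integer, so $\Ad(u(n))$ fixes all diagonal matrices and therefore $uau^{*}$ satisfies the defining relations of $A$ with the same sequence $(\lambda_n(a))$ whenever $a\in A$. Hence $\sigma:=\Ad(u)$ restricts to a $*$-automorphism of $A$, with inverse $\Ad(u^{*})$.

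For outerness, writing the defining relation of $A$ for the putative implementer $v=w^{-1}u$ at an integer $n$ gives $w(n+1)/w(n)=e^{-i\theta_n}$, so $w(n)=w(1)e^{-i\sum_{k<n}\theta_k}$ must converge in $\mathbb T$; thus if $\sum_n\theta_n$ is made to diverge mod $2\pi$ (say $\theta_n=1/n$), then $\sigma$ is outer on $A$. The heart of the matter is to show $\sigma\in\overline{\overline{\El(A)}}_{cb}$ — in fact $\sigma\in\El(A)$. Expanding $uxu^{*}$ on a non-integer fibre shows that any representation $\sigma=\sum_i M_{c^{(i)},d^{(i)}}$ with $c^{(i)},d^{(i)}\in A$ forces the symbol $\sum_i c^{(i)}(t)\otimes d^{(i)}(t)$ to equal $u(t)\otimes u(t)^{*}$ for every $t$, and at an integer $n$ this is the rank‑one matrix carrying only the off‑diagonal phase $e^{\pm i\theta_n}$; the coefficient sequences $k\mapsto \lambda_k(c^{(i)}),\ \lambda_k(d^{(i)})$ must be convergent, and the obvious attempts make these sequences accumulate the total twist $e^{i\sum_{k<n}\theta_k}$, which converges only when $\sum\theta_n$ converges (i.e. only when $\sigma$ is inner). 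The way around this, which is the delicate step, is \emph{not} to let the coefficient sequences approach a rank‑one–compatible limit to first order: taking $\lambda_k(c^{(i)})=\gamma_i+\varepsilon^{(i)}_k$, $\lambda_k(d^{(i)})=\delta_i+\eta^{(i)}_k$ with $\varepsilon^{(i)}_k,\eta^{(i)}_k\to0$ only of order $k^{-1/2}$ and suitably ``spiralling'', the quadratic terms $\varepsilon^{(i)}_k\eta^{(j)}_k$ are non‑summable, so the three linear systems produced by the symbol at the integers (the $(1,1)$-, $(1,2)$- and $(2,1)$-entries) can be solved with genuinely convergent sequences even though $\sum\theta_n=\infty$. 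One then extends the $c^{(i)},d^{(i)}$ continuously across the integers (free, since the non‑integer fibres impose no constraint beyond the prescribed limiting values) and checks scalarity at $\infty$ (automatic once $\theta_n\to0$ and $\sum_i\gamma_i\delta_i=1$), obtaining $c^{(i)},d^{(i)}\in A$ with $\sigma=\sum_i M_{c^{(i)},d^{(i)}}$.

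A cleaner alternative for the last step, which I would try in parallel, is to note that $\sigma$ is a bounded $Z(A)$-linear map preserving every closed ideal of $A$ (since $\Ad(u)$ induces the identity on $\PP(A)$, fixing each $\lambda_k$), and to invoke Magajna's theorem \cite[Theorem~1.1]{Mag} to get $\sigma\in\El(A)$ at once — provided one first verifies that $A$, being unital and $2$‑subhomogeneous, satisfies the quasicentrality hypotheses used in Remark \ref{homogidpres}. Either way, I expect the genuine obstacle to be exactly the tension between the divergence of $\sum\theta_n$ forced by outerness and the convergence of the coefficient sequences forced by the elementary representation, and the resolution to be the oscillatory/spiralling choice sketched above (the very same device that underlies the outer‑derivation example of \cite[Example~6.1]{Gog1}).
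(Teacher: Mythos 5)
Your choice of unitary, the verification that $\Ad(u)$ restricts to a $*$-automorphism of $A$, and the outerness argument (an implementer in $A$ must be $u$ times a scalar function $w$, forcing $w(n+1)=e^{\pm i\theta_n}w(n)$ and hence convergence of the partial sums of $\theta_n$ on the circle) all match the paper's proof in substance. The gap is in the one step that actually matters: showing $\sigma\in\El(A)$. Your primary route — coefficient sequences $\gamma_i+\varepsilon^{(i)}_k$ with ``spiralling'' corrections of order $k^{-1/2}$ whose non-summable quadratic cross-terms are supposed to solve the linear systems coming from the fibres — is a heuristic, not a construction: no coefficients are exhibited, the number of summands is never fixed, and no verification is offered that the resulting symbol $\sum_i c^{(i)}(t)\otimes d^{(i)}(t)$ induces $\Ad(u(t))$ on every non-integer fibre while the sequences $k\mapsto\lambda_k(c^{(i)})$, $\lambda_k(d^{(i)})$ converge. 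As written this is exactly the assertion to be proved, restated. Your ``cleaner alternative'' does not rescue it: Magajna's theorem as used in Remark \ref{homogidpres} requires $A$ to be homogeneous, whereas this $A$ has fibres $\MM_2$ at non-integer points and $\C\oplus\C$ at integer points (it is $2$-subhomogeneous, not homogeneous), and \cite[Theorem~1.1]{Mag} is an equivalence characterizing precisely the homogeneous-type algebras for which that conclusion holds, so it cannot be invoked here.

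The paper closes this gap by a completely different and softer argument: it writes $\sigma=\exp(\mathrm{ad}(b))$ with $b=\mathrm{diag}(f,0)$, where $\mathrm{ad}(b)$ is the outer derivation of \cite[Example~6.1]{Gog1} already known to lie in the \emph{operator norm} closure of the inner $*$-derivations of $A$; continuity of the exponential then places $\sigma$ in the operator norm closure of the inner $*$-automorphisms, hence of $\El(A)$, and the ad hoc fact \cite[Lemma 6.6]{Gog1} that $\El(A)$ is operator-norm closed for this particular $A$ yields $\sigma\in\El(A)$. Without either that closedness lemma or an explicit finite-sum representation, your argument establishes only that $\sigma$ is outer and approximable by elementary operators, not that it \emph{is} one.
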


\begin{proof} Let $f:[1, \infty] \to \C$ be a continuous function such the series $\sum_{n=1}^\infty f(n)$ does not converge and such that the range of $f$ is a subset of the imaginary axis. We define an element $b\in B$  by
$$b:=\left[ \begin{array}{cc}
               f & 0 \\
              0 & 0 \\
               \end{array} \right].$$
It was observed in \cite[Section 6]{Gog1} that $\d:=\mathrm{ad}(b)$ defines an outer derivation of $A$ which lies in the operator norm closure of the space of all inner derivations of $A$. Since $b^*=-b$, $\d$
is a $*$-derivation (i.e.~$\d(x^*)=\d(x)^*$ for all $x \in A$). Hence $\sigma:=\exp(\d)$ defines a $*$-automorphism of $A$ (see e.g.~\cite[Section 4.3]{AM}). Note that $\sigma=\mathrm{Ad}(u)$, where $$u:=\exp b=\left[ \begin{array}{cc}
               \exp f & 0 \\
              0 & 1 \\
               \end{array} \right]$$ is a unitary element of $B$. Since the exponential map is continuous, and since $\d$ in fact lies in the operator norm closure of the space of all inner $*$-derivations of $A$, we conclude that $\sigma$ lies
               in the operator norm closure of the set of all inner $*$-automorphisms of $A$.

\textit{Claim 1.} $\sigma$ is an outer automorphism of $A$.

On the contrary, suppose that there exists an invertible element $a \in A$ such that $\sigma=\mathrm{Ad}(a)$. Then $u^*ax=xu^*a$ for all $x \in A$. Since $$J:=\{a \in A \ : \ a(n)=0 \ \mathrm{for} \ \mathrm{all} \ n \in \N\}$$ defines an closed essential ideal of both $A$ and $B$, we conclude that $u^*a$ is an invertible central element of $B$.
               Hence, there exists an invertible continuous function $\varphi \in C([1, \infty])$ such that $$a =(\varphi \oplus \varphi)u=\left[ \begin{array}{cc}
                 \exp f \cdot \varphi & 0 \\
              0 & \varphi \\
               \end{array} \right].$$
               Since $a \in A$, we conclude that $\varphi(n+1)=(\exp{f(n)})\varphi(n)$, and consequently
\begin{equation}\label{conv}
\varphi(n+1)=\exp \left (\sum_{k=1}^n f(k)\right ) \varphi(1)
\end{equation}
for all $n \in \N$. Finally, since $\lim_{x \to \infty} \varphi(x)$ exists and since $\varphi(1)\neq 0$, (\ref{conv}) implies that the series $\sum_{n=1}^\infty f(n)$ converges, a contradiction.

\textit{Claim 2.} $\sigma$ is an elementary operator on $A$.

As noted, $\sigma$ lies in the operator norm closure of the set of all inner $*$-auto\-morphisms of $A$. In particular, $\sigma$ lies in the operator norm closure of the set of all elementary operators on $A$. But the latter set is closed in the operator norm by \cite[Lemma 6.6]{Gog1}. Hence, $\sigma$ is an elementary operator.
\end{proof}

We end this paper with the following question:

\begin{problem}
Is Corollary \ref{primeinnaut} true for all von Neumann algebras? In particular, if an automorphism $\sigma$ of a von Neumann algebra $A$ is also an elementary operator, is $\sigma$ necessarily an inner automorphism?
\end{problem}

\end{document}